\documentclass[12pt]{amsart}
\usepackage{graphicx}
\usepackage{tikz}
\usepackage{amsmath}
\usepackage{caption}
\usepackage{float}

\usetikzlibrary{calc}
\usepackage[letterpaper,left=1in,right=1in]{geometry}
\theoremstyle{theorem}%
\newtheorem{theorem}{Theorem}[section]

\newtheorem{lemma}[theorem]{Lemma}
\theoremstyle{definition}%
\newtheorem{remark}{Remark}%
\newtheorem{definition}{Definition}

\begin{document}

\title[Signs of Hamiltonian Circles in  Simple Plane Signed Graphs]{Signs of Hamiltonian Circles in  Simple Plane Signed Graphs}

\author{Xiyong Yan}
\subjclass[2020]{Primary: 05C45 (Hamiltonian graphs); Secondary: 05C22 (Signed and weighted graphs)}
\address{89 Park Ave Apt 29,  Binghamton, NY,  USA,  13903.}

\begin{abstract}
We study which signs can occur among Hamiltonian circles in  simple plane signed graphs.  Using a face-based viewpoint, we relate the sign of a Hamiltonian
circle to the product of the signs of the faces inside it, and we introduce co-Hamiltonian sequences.
This yields a criterion for the existence of opposite-sign Hamiltonian circles via two co-Hamiltonian sequences
with opposite face-products.  Motivated by signed grid graphs, we develop local structural theorems that allow one
to certify the existence of both signs without explicitly constructing the full sequences, including a ladder-type
configuration where toggling along two $4$-circles produces Hamiltonian circles of opposite sign,  as well as hexagon configurations that realize both signs.

\end{abstract}

\maketitle

\section{Introduction}

We investigate which signs can occur among Hamiltonian circles in simple plane signed graphs. Motivated by a question of Zaslavsky \cite{1}, we ask when a signed plane graph that contains a Hamiltonian circle must contain both a positive and a negative one.

Our approach is based on a face-oriented viewpoint. In a plane signed graph, the sign of a Hamiltonian circle can be expressed as the product of the signs of the bounded faces it encloses. This leads to the notion of a \emph{co-Hamiltonian sequence}, which describes a systematic way to remove faces while preserving 2-connectedness and ultimately leaving a unique Hamiltonian circle. Using this framework, we obtain a criterion for the existence of opposite-sign Hamiltonian circles in terms of two co-Hamiltonian sequences whose face-products differ.

Motivated by signed grid graphs, we further develop local structural results that allow one to certify the existence of both signs without constructing full sequences. In particular, ladder-type configurations and certain hexagon structures force the realization of both positive and negative Hamiltonian circles. These results show that the global sign behavior of Hamiltonian circles in plane signed graphs is governed by local face configurations.

\section{Hamiltonian Circle in  Simple Plane Signed Graphs}

We now study Hamiltonian circles in  simple plane signed graphs.  In contrast to the complete graph case, the
fixed embedding restricts how Hamiltonian circles can run through the graph, but it also provides additional
structure through faces.  Our approach is to express the sign of a Hamiltonian circle in terms of the signs of
the faces it encloses, and then to control these face products by removing faces from the outside inward.

To formalize this process, we introduce co-Hamiltonian sequences and the associated Hamiltonian sets, together
with the weak dual (face graph) as a bookkeeping tool.  This yields a criterion for the existence of Hamiltonian
circles of opposite sign in terms of two co-Hamiltonian sequences with opposite face products, and it motivates
later practical tests based on local configurations in signed grid graphs and ladder-type subgraphs.

\begin{definition}[Outer face, outer edges, exterior and interior vertices]
Let $\Sigma$ be a simple plane signed graph, and let $F_0$ denote its (unique) unbounded face, called the \emph{outer face}.
An edge $e$ of $\Sigma$ is an \emph{outer edge} if it is incident with $F_0$, equivalently, if $e$ lies on the boundary $\partial F_0$.
A vertex $v$ of $\Sigma$ is an \emph{exterior vertex} if it is incident with $F_0$, and an \emph{interior vertex} otherwise.
\end{definition}

\begin{definition}[Co-Hamiltonian edge sequence, co-Hamiltonian face sequence, and Hamiltonian set]\label{coh}
Let $G$ be a $2$-connected plane graph with outer face $F_0$, and let $\mathcal{B}$ be the set of bounded faces of $G$.

\smallskip
\noindent\textbf{(1) Co-Hamiltonian edge sequence.}
Let $G$ be a $2$-connected plane graph with outer face $F_0(G)$.
For an ordered edge sequence
\[
E'=(e_1,e_2,\dots,e_k),
\]
set
\[
G_0:=G
\quad\text{and}\quad
G_t:=G-\{e_1,\dots,e_t\}\ \ (t=1,\dots,k).
\]
We call $E'$ a \emph{co-Hamiltonian edge sequence} if for each $t=1,\dots,k$,
\begin{enumerate}
\item $e_t$ lies on the boundary of the outer face of $G_{t-1}$, and
\item $G_t$ is $2$-connected,
\end{enumerate}
and in the final graph $G_k$ every vertex is exterior (i.e., incident with the outer face) and
$G_k$ contains exactly one Hamiltonian circle.

\smallskip
\noindent\textbf{(2) Co-Hamiltonian face sequence.}
During the deletion process above, each deleted edge $e_t$ is required to lie on the boundary of the outer face of $G_{t-1}$.
Hence deleting $e_t$ merges the outer face of $G_{t-1}$ with a unique bounded face of $G_{t-1}$; denote that face by $F_t$.
The resulting ordered face sequence
\[
L=(F_1,F_2,\dots,F_k)
\]
is called the \emph{co-Hamiltonian (face) sequence} induced by $E'$.
(Equivalently, $F_t$ is the bounded face that becomes part of the outer face at step $t$.)

\smallskip
\noindent\textbf{(3) Hamiltonian set.}
Let $H$ be the set of bounded faces of the final graph $G_k$ (equivalently, the faces in $\mathcal{B}$ that are not merged into the outer face during the process).
Then $H$ is called a \emph{Hamiltonian set} of $G$, and its unique Hamiltonian circle is called the \emph{Hamiltonian circle determined by $H$}.
\end{definition}

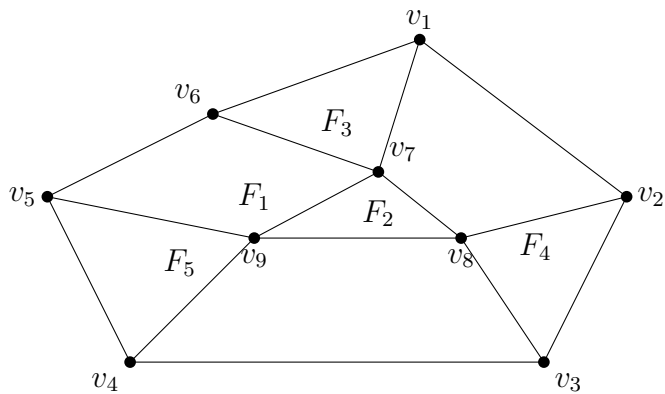
\begin{figure}[H]
\centering
\begin{tikzpicture}[scale=1.1, line join=round]

\coordinate (v1) at (0,0);
\coordinate (v2) at (5,0);
\coordinate (v3) at (6,2);
\coordinate (v4) at (3.5,3.9);
\coordinate (v5) at (1,3);
\coordinate (v6) at (-1,2);

\coordinate (u1) at (1.5,1.5);   
\coordinate (u2) at (4,1.5);     
\coordinate (u3) at (3,2.3);     

\draw (v1)--(v2)--(v3)--(v4)--(v5)--(v6)--cycle;

\draw (v6)--(u1)--(v1);
\draw (u1)--(u3)--(v5);
\draw (u3)--(u2)--(v3);
\draw (u2)--(v2);
\draw (u1)--(u2);
\draw (v4)--(u3);

\foreach \p in {v1,v2,v3,v4,v5,v6,u1,u2,u3}
  \fill (\p) circle (2pt);

\node[above]      at (v4) {$v_1$};
\node[right]      at (v3) {$v_2$};
\node[below right]at (v2) {$v_3$};
\node[below left] at (v1) {$v_4$};
\node[left]       at (v6) {$v_5$};
\node[above left] at (v5) {$v_6$};

\node[above right] at (u3) {$v_7$};
\node[below]       at (u2) {$v_8$};
\node[below]       at (u1) {$v_9$};

\node at (2.5,2.9) {$F_3$};
\node at (3.0,1.8) {$F_2$};
\node at (0.6,1.2) {$F_5$};
\node at (4.9,1.4) {$F_4$};
\node at (1.5,2) {$F_1$};

\end{tikzpicture}
\caption{A plane graph with three interior vertices}
\label{3b}
\end{figure}

\textbf{Example.}
In Figure~\ref{3b} the graph has three interior vertices $v_7,v_8,v_9$.
Consider the face sequence $L_1=(F_1,F_2)$.
First delete the outer edge $v_5v_6$.
This merges the bounded face $F_1$ with the outer face, so $F_1$ is no longer a bounded face and the vertices
$v_7$ and $v_9$ become exterior.
Next delete the edge $v_7v_9$; after the first deletion this edge lies on the boundary of the outer face, so its
deletion merges the bounded face $F_2$ with the outer face and makes $v_8$ exterior as well.
After these two deletions the remaining graph is still $2$-connected.
Moreover, the remaining bounded faces form a Hamiltonian set and the associated subgraph contains exactly one
Hamiltonian circle.
Thus $L_1$ is a co-Hamiltonian sequence (Definition~\ref{coh}).
Similarly, $L_2=(F_3,F_4,F_5)$ is another co-Hamiltonian sequence.

\begin{definition}[Weak dual or face graph]
Let $G$ be a plane graph with outer face $F_0$.
The \emph{weak dual} of $G$, also called the \emph{face graph}, and denoted by $D(G)$,
is the graph whose vertices correspond to the bounded faces of $G$.
Two vertices of $D(G)$ are adjacent if the corresponding bounded faces of $G$
share an edge.
\end{definition}

\begin{definition}[Outerplane graph]
A \emph{simple outerplane graph} is a simple plane graph in which every vertex lies on the boundary of the outer face $F_0$.

\end{definition}

\begin{definition}[Outer boundary circle]
Let $G$ be a $2$-connected plane graph with outer face $F_0$.
The boundary $\partial F_0$ is the closed walk formed by the edges incident with $F_0$.
In a $2$-connected plane graph, $\partial F_0$ is a simple circle, called the
\emph{outer boundary circle} of $G$.
\end{definition}

\begin{lemma}\label{lemmatree}
Let $G$ be a $2$-connected simple outerplane graph with outer face $F_0$, and assume that $G$ has at least one
bounded face. Then $D(G)$ is a tree.
\end{lemma}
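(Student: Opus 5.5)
We prove connectivity and acyclicity of $D(G)$ separately; the first is easy, and the second carries the weight of the argument. Throughout, let $C=\partial F_0$ be the outer boundary circle, which is a simple circle because $G$ is $2$-connected. Since $G$ is outerplane, every vertex lies on $C$, so every edge of $G$ not on $C$ is a chord of $C$ drawn in the interior disk. We use this chord picture repeatedly.

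\textbf{Connectivity.} We induct on the number of chords. If there are none, then $G=C$, there is exactly one bounded face, and $D(G)$ is a single vertex, hence a tree. Otherwise pick a chord $xy$. It splits $C$ into two paths $P_1$ and $P_2$ from $x$ to $y$, and the disk into two closed regions bounded by the circles $C_i=P_i\cup\{xy\}$. Let $G_i$ be the subgraph of $G$ consisting of $C_i$ together with the chords lying inside the region bounded by $C_i$. Each $G_i$ is outerplane, it is $2$-connected because its outer boundary $C_i$ is a Hamiltonian circle of $G_i$, and it has fewer chords than $G$. The bounded faces of $G$ are exactly the bounded faces of $G_1$ together with those of $G_2$. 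The edge $xy$ lies on exactly one bounded face $A_1$ of $G_1$ and one bounded face $A_2$ of $G_2$, and these two faces are adjacent in $D(G)$. By induction $D(G_1)$ and $D(G_2)$ are trees.

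The next claim is that the only adjacency in $D(G)$ between a face of $G_1$ and a face of $G_2$ is the edge $A_1A_2$. Two such faces lie on opposite sides of the chord, so any edge they share must lie on both $C_1$ and $C_2$, and the only such edge is $xy$. We also need that no two faces on the same side gain an adjacency in $G$ that they lack in $G_i$, which holds because faces and their boundaries are unchanged. Hence $D(G)$ is obtained from the disjoint trees $D(G_1)$ and $D(G_2)$ by adding the single edge $A_1A_2$, so $D(G)$ is a tree. This inductive argument therefore gives acyclicity as well, not just connectivity.

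\textbf{Alternative acyclicity argument.} If the induction feels delicate, we can argue directly. Suppose $D(G)$ contains a cycle $B_1B_2\cdots B_mB_1$ with $m\ge 3$. Draw a closed curve $\gamma$ that passes through interior points of the faces $B_i$ and crosses each shared edge at its midpoint. All of $\gamma$ lies in the interior disk of $C$. Any shared edge is a chord, since an edge of $C$ borders $F_0$, so $\gamma$ crosses some chord $xy$ exactly once. But $\gamma$ lies in the disk, and the chord separates the disk into two sides. A closed curve that crosses the separating chord exactly once transversally is impossible, by a parity or Jordan-curve argument. This contradiction shows $D(G)$ has no cycle.

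\textbf{Main obstacle.} The hard step is the justification that the two sides of a chord share only the edge $xy$, equivalently that a chord separates the disk. This is where the $2$-connectivity (so that $C$ is a simple circle) and the simplicity of $G$ are used: simplicity means $xy$ is not parallel to an edge of $C$, so both regions are genuine bounded faces. We will handle this step by invoking the Jordan curve theorem applied to $C_1$ and $C_2$.
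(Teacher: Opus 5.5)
Your proof is correct, but it takes a genuinely different route from the paper's.

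\textbf{The paper's route.} It proves connectivity by contradiction. It groups the bounded faces into components of $D(G)$ and takes the unions $G_i$ of their boundaries. It then argues that two such unions are edge-disjoint and meet in exactly one vertex, so that this vertex is a cut vertex; a rather informal extra step handles more than two components. Acyclicity is proved separately: the dual cycle is drawn as a simple closed curve, and the Jordan curve theorem forces it to enclose a vertex of $G$, which then cannot lie on $\partial F_0$.

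\textbf{Your route.} You use a structural fact. Since $\partial F_0$ is a simple circle through every vertex, every non-boundary edge is a chord of it. You then induct on the number of chords. Cutting along a chord $xy$ gives two smaller $2$-connected outerplane graphs. Their bounded faces partition those of $G$, and $xy$ is the only edge shared across the cut. Hence $D(G)$ is two disjoint trees joined by the single edge $A_1A_2$.

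\textbf{What each buys.} Your induction gives connectivity and acyclicity at once. It also avoids the cut-vertex case analysis, which is the least rigorous part of the paper's proof. The cost is reliance on the $\theta$-graph form of the Jordan curve theorem: a chord splits a closed disk into two closed disks meeting exactly in the chord. You also need to identify the faces of $G$ with those of $G_1$ and $G_2$ as point sets. You handle both adequately.

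\textbf{Minor points.} Your section headed ``Connectivity'' in fact proves the whole tree property. Your alternative acyclicity argument, a closed curve crossing a separating chord exactly once, is valid. It is an edge-based variant of the paper's vertex-enclosure argument, but it is redundant given the induction. In your closing remark, simplicity is not what makes the two sides of $xy$ ``genuine bounded faces''; those regions need not be faces at all. Simplicity only ensures that each $C_i$ has at least three vertices, so that containing the Hamiltonian circle $C_i$ makes $G_i$ $2$-connected.
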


\begin{proof}
We show that $D(G)$ is connected and has no circle.

\smallskip
\noindent\emph{(1) $D(G)$ is connected.}
Assume for a contradiction that $D(G)$ is disconnected, and let $\mathcal F_1,\mathcal F_2,...,\mathcal F_k$ be  nonempty
components of $D(G)$. If $k=1,$ then $D(G)$ is connected. Thus, we may assume $k>1$. For $i=1,2,...,k$ set
\[
G_i:=\bigcup_{f\in\mathcal F_i}\partial f.
\]
If a bounded face in $\mathcal F_1$ shared an edge with a bounded face in $\mathcal F_2$, then the corresponding
vertices of $D(G)$ would be adjacent, impossible. Hence $G_1$ and $G_2$ are edge disjoint.

Because $G$ is $2$-connected, it has no bridges, so every edge of $G$ lies on the boundary of some bounded face.
If $V(G_1)\cap V(G_2)=\emptyset$, then $G$ is disconnected,
contradiction. So $V(G_1)\cap V(G_2)\neq\emptyset$.

We claim that $|V(G_1)\cap V(G_2)|\le 1$.  Suppose $G_1$ and $G_2$ share two distinct vertices $v_1\neq v_2$.
Let $O_i$ be the boundary circle of the outer face of $G_i$ in the inherited embedding. Since $G$ is outerplane,
all vertices lie on $\partial F_0$, hence $O_1$ and $O_2$ lie on $\partial F_0$ as well. Choose $v_1,v_2$ so that
the $v_1$--$v_2$ arcs $P_1\subseteq O_1$ and $P_2\subseteq O_2$ satisfy
\[
V(P_1)\cap V(P_2)=\{v_1,v_2\}.
\]
Then $P_1\cup P_2$ is a simple circle in $G$.

If both $P_1$ and $P_2$ are the single edge $v_1v_2$, then $G$ has two parallel edges between $v_1$ and $v_2$,
contradicting that $G$ is simple (Figure~\ref{figoo}, left). Otherwise, at least one of $P_1,P_2$ has an interior
vertex; say $P_2$ contains $v_3\notin\{v_1,v_2\}$. Since $P_1\cup P_2$ is a  circle, the vertex $v_3$ lies
 inside it, hence $v_3$ is not incident with $F_0$, contradicting that $G$ is outerplane
(Figure~\ref{figoo}, middle). This proves $|V(G_1)\cap V(G_2)|\le 1$.

Thus $V(G_1)\cap V(G_2)=\{v\}$ for some vertex $v$. Suppose $k=2$.
Since $G_1$ and $G_2$ are edge-disjoint and meet only at $v$,
the graph $G-v$ is disconnected. Hence $v$ is a cut vertex of $G$,
contradicting $2$-connectedness. Suppose $k>2$. If $v$ is a vertex cut, then we get a contradiction.
If $v$ is not a vertex cut, then there exist
$G_{i_1},\dots,G_{i_r}\in\{G_1,\dots,G_k\}$
such that the sequence of subgraphs
$G_1,G_2,G_{i_1},\dots,G_{i_r},G_1$
has the property that each pair of consecutive subgraphs shares a common vertex.
However, these subgraphs enclose a bounded face $F$ (see Figure~\ref{figoo}, right),
and $F$ connects all $G_i$, a contradiction.

Therefore $D(G)$ is connected.

\smallskip
\noindent\emph{(2) $D(G)$ is acyclic.}
Suppose for a contradiction that $D(G)$ contains a circle
\[
f_1,f_2,\dots,f_k,f_1\qquad (k\ge 3),
\]
where consecutive faces share an edge. Let $e_i$ be the edge common to $f_i$ and $f_{i+1}$ (indices mod $k$).
Then the dual edges $e_1^*,\dots,e_k^*$ form a circle in the planar dual $G^*$, hence they trace a simple closed
curve in the plane. By the Jordan curve theorem, this curve bounds a region that is disjoint from the outer face.
In particular, that bounded region contains a vertex of $G$ that is not incident with $F_0$, contradicting that
$G$ is outerplane. Hence $D(G)$ has no circle, so it is acyclic.

\smallskip
Since $D(G)$ is connected and acyclic, it is a tree.
\end{proof}

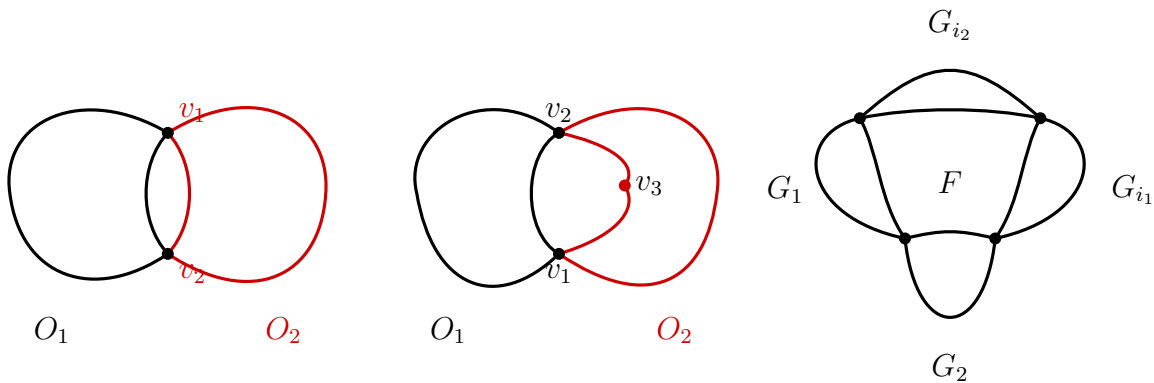
\begin{figure}[H]
\centering
\begin{tikzpicture}[scale=1, line cap=round, line join=round]
\tikzset{
  bnd/.style={line width=1.2pt},
  redbnd/.style={bnd,draw=red!80!black},
  dot/.style={circle,fill=black,inner sep=1.6pt},
  rdot/.style={circle,fill=red!80!black,inner sep=1.6pt}
}


\def\ThirdScalee{.7} 
\begin{scope}[shift={(0,0)},scale=\ThirdScalee]
  \coordinate (v1) at (0,1.15);
  \coordinate (v2) at (0,-1.15);

  \draw[bnd]
    (v1)
      .. controls (-1.6,2.1) and (-3.2,1.4) .. (-3.0,0.0)
      .. controls (-2.8,-1.6) and (-1.4,-2.1) .. (v2)
      .. controls (-0.55,-0.55) and (-0.55,0.55) .. (v1);

  \draw[redbnd]
    (v1)
      .. controls (1.7,2.2) and (3.1,1.4) .. (3.0,0.0)
      .. controls (2.9,-1.6) and (1.6,-2.2) .. (v2)
      .. controls (0.55,-0.55) and (0.55,0.55) .. (v1);

  \node[dot] at (v1) {};
  \node[dot] at (v2) {};
  \node[red!80!black, above right] at (v1) {$v_1$};
  \node[red!80!black, below right] at (v2) {$v_2$};

  \node at (-2.2,-2.6) {$O_1$};
  \node[red!80!black] at (2.2,-2.6) {$O_2$};
\end{scope}

\def\ThirdScaleee{.7} 
\begin{scope}[shift={(5.2,0)},scale=\ThirdScaleee]
  \coordinate (w2) at (0,1.15);
  \coordinate (w1) at (0,-1.15);
  \coordinate (w3) at (1.25,0.15);

  \draw[bnd]
    (w2)
      .. controls (-1.4,2.2) and (-3.0,1.2) .. (-2.7,0.0)
      .. controls (-2.4,-1.8) and (-1.2,-2.3) .. (w1)
      .. controls (-0.7,-0.75) and (-0.7,0.75) .. (w2);

  \draw[redbnd]
    (w2)
      .. controls (1.8,2.2) and (3.2,1.3) .. (3.0,0.0)
      .. controls (2.8,-1.7) and (1.7,-2.3) .. (w1);

  \draw[redbnd]
    (w2)
      .. controls (0.95,0.95) and (1.55,0.55) .. (w3)
      .. controls (1.55,-0.35) and (1.00,-0.85) .. (w1);

  \node[dot] at (w2) {};
  \node[dot] at (w1) {};
  \node[rdot] at (w3) {};

  \node[above] at (w2) {$v_2$};
  \node[below] at (w1) {$v_1$};
  \node[right] at (w3) {$v_3$};

  \node at (-2.1,-2.6) {$O_1$};
  \node[red!80!black] at (2.2,-2.6) {$O_2$};
\end{scope}


\def\ThirdScale{1.0} 

\begin{scope}[shift={(10.4,0)}, scale=\ThirdScale, line cap=round, line join=round]
  \tikzset{
    bnd/.style={line width=1.2pt},
    dot/.style={circle,fill=black,inner sep=1.6pt}
  }

  \coordinate (a) at (-1.2, 1.0);  
  \coordinate (b) at ( 1.2, 1.0);  
  \coordinate (c) at ( 0.6,-0.6);  
  \coordinate (d) at (-0.6,-0.6);  

  \draw[bnd]
    (a) .. controls (-0.5,1.15) and (0.5,1.15) .. (b);

  \draw[bnd]
    (a) .. controls (-0.95,0.55) and (-0.95,-0.10) .. (d);

  \draw[bnd]
    (b) .. controls (0.95,0.55) and (0.95,-0.10) .. (c);

  \draw[bnd]
    (d) .. controls (-0.10,-0.48) and (0.10,-0.48) .. (c);

  \node at (0,0.15) {$F$};


  \draw[bnd]
    (a) .. controls (-0.25,1.85) and (0.25,1.85) .. (b);

  \draw[bnd]
    (a) .. controls (-2.10,0.80) and (-2.00,-0.30) .. (d);

  \draw[bnd]
    (b) .. controls ( 2.10,0.80) and ( 2.00,-0.30) .. (c);

  \draw[bnd]
    (d) .. controls (-0.45,-2.00) and (0.45,-2.00) .. (c);

  \node[dot] at (a) {};
  \node[dot] at (b) {};
  \node[dot] at (c) {};
  \node[dot] at (d) {};

  \node[above] at (0,1.90) {$G_{i_2}$};
  \node[left]  at (-1.8,0.05) {$G_{1}$};
  \node[right] at ( 2,0.05) {$G_{i_1}$};
  \node[below] at (0,-2) {$G_2$};

\end{scope}

\end{tikzpicture}
\caption{If $O_1$ and $O_2$ share two vertices $v_1,v_2$, then either parallel edges occur (left) or an interior vertex $v_3$ is forced (middle). The rightmost sketch illustrates the decomposition into $G_1,G_2,G_{i_1},G_{i_2}$ around a face $F$.}
\label{figoo}
\end{figure}

\begin{lemma}\label{lem:weak-dual-tree-unique-ham}
Let $G$ be a $2$-connected simple outerplane graph with outer face $F_0$, and assume $G$ has at least one bounded face.
Then $G$ contains a unique Hamiltonian circle, namely the outer boundary circle $\partial F_0$.
\end{lemma}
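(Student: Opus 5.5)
We prove existence and uniqueness separately. Existence is immediate. Since $G$ is $2$-connected, the outer boundary $\partial F_0$ is a simple circle (the outer boundary circle defined above). Since $G$ is outerplane, every vertex lies on $\partial F_0$. Hence $\partial F_0$ is a Hamiltonian circle of $G$.

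For uniqueness we use induction on the number of bounded faces, with Lemma~\ref{lemmatree} supplying the structure. If there is exactly one bounded face, then every edge is incident with $F_0$ and with that face. So $G=\partial F_0$ is itself a circle, and its only Hamiltonian circle is $G$.

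If there are at least two bounded faces, $D(G)$ is a tree with at least two vertices, so it has a leaf $f$. Because $f$ is a leaf, it shares exactly one edge $e=xy$ with other bounded faces. Here we use that two faces of a $2$-connected outerplane graph share at most one edge, which follows from the acyclicity argument in Lemma~\ref{lemmatree}. Every other edge of $\partial f$ is an outer edge. Thus $\partial f$ consists of the chord $e$ together with an outer path $P$ from $x$ to $y$ having at least one internal vertex, since $G$ is simple. By outerplanarity each internal vertex of $P$ lies only on the edges of $P$, and so has degree $2$ in $G$.

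Now let $C$ be any Hamiltonian circle of $G$. A degree-$2$ vertex forces both of its incident edges into $C$, so $P\subseteq C$. Then $C$ cannot also use $e$: otherwise $P+e$ would be a proper subcircle of $C$, contradicting that $C$ passes through all vertices, because some vertex lies off $\partial f$. Next let $G'$ be obtained from $G$ by deleting the internal vertices of $P$ and keeping $e$. Then $G'$ is $2$-connected and outerplane, its outer boundary is $(\partial F_0 - P)+e$, and it has one fewer bounded face. Replacing $P$ by $e$ in $C$ gives a Hamiltonian circle of $G'$. By induction this circle is $\partial F_0(G')$, and substituting $P$ back for $e$ gives $C=\partial F_0$.

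The main obstacle is checking the leaf structure rigorously: that a leaf face meets the rest of $G$ in exactly one chord, that the remaining part of its boundary is an outer path whose internal vertices have degree $2$, and that $G'$ stays $2$-connected and outerplane. I would argue these facts by the Jordan-curve reasoning already used in Lemma~\ref{lemmatree}. The key point is that any additional chord would separate a vertex from $F_0$ or create a cycle in $D(G)$.
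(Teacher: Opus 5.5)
Your proof is correct and follows the paper's route: induction on the number of bounded faces, a leaf face $f$ of the tree $D(G)$, the fact that every Hamiltonian circle must contain the outer path $P=\partial f-e$ and avoid the chord $e$, and reduction to a graph with one fewer bounded face. Your reduction (deleting the internal vertices of $P$ while keeping $e$) is a cleaner form of the paper's ``contract $P$ to a single edge $xy$'', which taken literally would create an edge parallel to $g$. You also justify points the paper only asserts: that a leaf face shares exactly one edge with the rest, that $P$ has an internal vertex, and that some vertex lies off $\partial f$.
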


\begin{proof}
Since $G$ is $2$-connected and outerplane, $\partial F_0$ is a simple circle containing every vertex of $G$,
so $\partial F_0$ is a Hamiltonian circle.

We prove uniqueness by induction on the number $b$ of bounded faces.

\smallskip
\noindent\emph{Base $b=1$.}
Then $G$ has exactly one bounded face $f$.
Every edge of $G$ lies on the boundary of a bounded face (no bridges in a $2$-connected plane graph), hence on $\partial f$.
Thus $G=\partial f=\partial F_0$, so there is exactly one Hamiltonian circle.

\smallskip
\noindent\emph{Inductive step.}
Assume $b\ge2$ and the statement holds for all such graphs with fewer than $b$ bounded faces.
By Lemma~\ref{lemmatree}, $D(G)$ is a tree, so it has a leaf face $f$.
Let $g=xy$ be the unique edge shared by $f$ and another bounded face; equivalently, $g$ is the unique edge of $\partial f$
not contained in $\partial F_0$.
Let $P:=\partial f - g$, the $x$--$y$ path along $\partial f$.

\medskip
\noindent\textbf{Claim.} Every Hamiltonian circle $C$ of $G$ contains all edges of $P$ and does not contain $g$.

\smallskip

\smallskip
Since $f$ is a leaf of $D(G)$, the edge $g$ is the \emph{only} edge of $\partial f$ that is not on $\partial F_0$.
Equivalently, every edge of $P$ lies on the outer boundary $\partial F_0$.

Let $u$ be an internal vertex of $P$ (so $u\neq x,y$). In an outerplane embedding, all edges incident with $u$
lie in the outer face except those belonging to faces that contain $u$. Because $f$ is the \emph{only} bounded face
on the side of $P$, there is no edge of $G$ that leaves $u$ to the interior of the disk bounded by $\partial f$.
Consequently, the only way for a Hamiltonian circle to pass through $u$ is to use the two boundary edges of $P$
incident with $u$. Thus every Hamiltonian circle must contain those two edges, and hence $P\subseteq C$.

Finally, if $g\in C$ as well, then $C$ contains every edge of $\partial f = P\cup\{g\}$.
Since $\partial f$ is a circle and $f$ is bounded, this would make $\partial f$ a proper subcircle of the Hamiltonian
circle $C$, which is impossible. Hence $g\notin C$.

\medskip
Now form $G_1$ by contracting the path $P$ to a single edge $xy$ embedded on the outer boundary.
Then $G_1$ is still simple, outerplane, and $2$-connected, and it has exactly $b-1$ bounded faces
(the leaf face $f$ is removed).

By induction, $G_1$ has a unique Hamiltonian circle, namely $\partial F_0(G_1)$.
By the Claim, every Hamiltonian circle of $G$ contains $P$ and avoids $g$, so contracting $P$ gives a Hamiltonian circle of $G_1$.
Conversely, expanding the edge $xy$ in $\partial F_0(G_1)$ back to the path $P$ yields a Hamiltonian circle of $G$.
Hence $G$ has exactly one Hamiltonian circle, and it is $\partial F_0$.
\end{proof}

\begin{lemma}\label{facegraph}
Let $\Sigma$ be a $2$-connected simple plane signed graph, let $H'$ be the set of all bounded faces of $\Sigma$,
and let $C_0$ be the boundary circle of the outer face.
Let $\sigma:E(G)\to\{+,-\}$ be an edge-signing.

For each bounded face $f\in H'$, define its face-sign by
\[
\sigma(f)=\prod_{g\in \partial f}\sigma(g).
\]
Then
\[
\sigma(C_0)=\prod_{f\in H'} \sigma(f).
\]
\end{lemma}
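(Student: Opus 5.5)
Our plan is to use an edge-counting (parity) argument. The statement concerns signs in $\{+,-\}$, which we identify with $\{+1,-1\}$. For any edge $e$, let $m(e)$ be the number of bounded faces $f\in H'$ whose boundary contains $e$. Then
\[
\prod_{f\in H'}\sigma(f)=\prod_{f\in H'}\prod_{g\in\partial f}\sigma(g)=\prod_{e\in E(\Sigma)}\sigma(e)^{m(e)},
\]
so everything reduces to computing the parity of $m(e)$.

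The key structural claim is that $m(e)=1$ if $e\in C_0$ and $m(e)=2$ otherwise. To justify it, we first use $2$-connectedness. A $2$-connected plane graph has no bridges, and every face boundary is a simple circle. Since the graph has no bridges, every edge lies on the boundaries of exactly two distinct faces, one on each side. The outer face $F_0$ is one of these two faces exactly when $e$ lies on $\partial F_0=C_0$. Hence an edge of $C_0$ borders exactly one bounded face, and any other edge borders exactly two bounded faces. Because each face boundary is a simple circle, $e$ appears at most once in $\partial f$, so there is no double counting within a single face.

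Substituting into the product, every edge off $C_0$ contributes $\sigma(e)^2=+$, and every edge of $C_0$ contributes $\sigma(e)$. This gives $\prod_{f}\sigma(f)=\prod_{e\in C_0}\sigma(e)=\sigma(C_0)$.

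The only real obstacle is stating cleanly the standard fact that in a $2$-connected plane graph each edge separates two distinct faces and face boundaries are circles. We will cite it as standard, for instance via Whitney's theorem on face boundaries of $2$-connected plane graphs.

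As an alternative that avoids this fact, we could induct on the number of bounded faces, using the deletion in Definition~\ref{coh}. Removing an outer edge $e$ that is shared with a bounded face $F$ replaces $C_0$ by $C_0\triangle\partial F$, which multiplies the sign by $\sigma(F)$. This deletion needs an outer edge whose removal keeps the graph $2$-connected (an ear decomposition supplies one). Because of that extra requirement, we prefer the direct counting argument.
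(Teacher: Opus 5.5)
Your proposal is correct and follows essentially the same route as the paper. Both expand the product of face signs and note that each edge of $C_0$ lies on exactly one bounded face while every other edge lies on exactly two, so those contribute $\sigma(e)^2=+$; your appeal to the standard facts that face boundaries of a $2$-connected plane graph are circles and that each edge borders exactly two distinct faces usefully justifies the dichotomy the paper simply asserts.
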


\begin{proof}
By definition,
\[
\sigma(C_0)=\prod_{g\in E(C_0)} \sigma(g).
\]

On the other hand,
\[
\prod_{f\in H'} \sigma(f)
=\prod_{f\in H'} \ \prod_{g\in \partial f} \sigma(g).
\]
Reordering the product, each edge $g$ of $G$ appears once for each bounded face incident with $g$.

We distinguish two cases.

\smallskip
\noindent\textbf{(i) Boundary edges.}
If an edge $g$ lies on the boundary circle $C_0$, then it is incident with exactly one bounded face.
Hence $\sigma(g)$ appears exactly once in the product.

\smallskip
\noindent\textbf{(ii) Interior edges.}
If an edge $g$ is shared by two bounded faces, then $\sigma(g)$ appears twice in the product.
Since $\sigma(g)\in\{+,-\}$, we have
\[
\sigma(g)^2=+.
\]
Thus all interior-edge contributions cancel.

\smallskip
Therefore, only boundary edges contribute nontrivially, and we obtain
\[
\prod_{f\in H'} \sigma(f)
=\prod_{g\in E(C_0)} \sigma(g)
=\sigma(C_0).
\]
\end{proof}

\begin{lemma}[Peeling Lemma]\label{lem:peeling}
Let $G$ be a $2$-connected plane graph that contains a Hamiltonian circle $C$.
Assume that $C$ is \emph{not} the boundary circle of the outer face of $G$
(equivalently, some edge of $G$ on the boundary of the outer face is not in $E(C)$).
Then there exists an edge $e\notin E(C)$ on the boundary of the outer face of $G$
such that $G-e$ is still $2$-connected.
\end{lemma}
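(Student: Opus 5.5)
The key observation is that any spanning subgraph of $G$ that contains the Hamiltonian circle $C$ is automatically $2$-connected. So the whole task reduces to showing that some outer edge is missing from $C$. I would therefore argue in two short steps, with no induction and no case analysis on the embedding.

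\emph{Step 1: existence of an outer edge not in $C$.} Let $O=\partial F_0$ be the outer boundary circle of $G$; it is a simple circle because $G$ is $2$-connected. Suppose, for contradiction, that every edge of $O$ lies in $E(C)$. Then $O$ is a subgraph of $C$. Both are circles, and a circle contains no proper subcircle, so $O=C$. This contradicts the hypothesis that $C$ is not the outer boundary circle. Hence there is an edge $e$ on the boundary of the outer face with $e\notin E(C)$. This is also the content of the ``equivalently'' clause in the statement.

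\emph{Step 2: $G-e$ is $2$-connected.} Since $e\notin E(C)$, the circle $C$ is a subgraph of $G-e$, and it still passes through every vertex of $G$. A circle on $n\ge 3$ vertices is $2$-connected: deleting any one vertex leaves a path, which is connected. Adding edges to a spanning $2$-connected subgraph keeps the graph $2$-connected, because for any vertex $v$, the graph $(G-e)-v$ contains the connected spanning subgraph $C-v$. We have $n\ge3$ because $C$ is a circle in a simple graph; and if parallel edges were allowed, $n\ge 2$ with $C$ a digon still gives connectivity after deleting a vertex. Therefore $G-e$ is $2$-connected, and $e$ is the required edge.

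I do not expect a genuine obstacle. The only point needing care is Step 1: it relies on the outer boundary being a simple circle, which is where $2$-connectedness of $G$ is used, as in the definition of the outer boundary circle. It also relies on the fact that one circle contained in another must equal it. Note that any outer edge not in $C$ works, which is stronger than the statement requires.
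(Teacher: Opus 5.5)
Your proposal is correct and matches the paper's proof: pick an outer-boundary edge $e\notin E(C)$, then $G-e$ still contains the spanning circle $C$ and is therefore $2$-connected. The only differences are minor. You justify the existence of such an edge explicitly, using the fact that a circle has no proper subcircle, where the paper takes it as immediate. You also check $2$-connectivity by deleting a vertex, whereas the paper uses the two arcs of $C$ as internally disjoint paths.
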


\begin{proof}
Since $C$ is not the boundary circle of the outer face, the boundary walk of the outer face uses at least one edge
that is not in $C$. Choose such an edge and call it $e$. Then $e$ lies on the boundary of the outer face and
$e\notin E(C)$.

Now consider $G-e$. The circle $C$ is still present as a Hamiltonian circle in $G-e$ (because we did not delete any edge of $C$).
A graph that contains a Hamiltonian circle is $2$-connected: for any two distinct vertices $x,y$ on $C$,
the circle $C$ provides two internally vertex-disjoint $x$--$y$ paths (the two arcs of $C$ between $x$ and $y$).
Hence $G-e$ is $2$-connected.
\end{proof}

\begin{lemma}\label{thm:ham-iff-coham-seq}
Let $G$ be a $2$-connected plane graph with outer face $F_0$, and let $\mathcal{B}$ be the set of bounded faces of $G$.
Then $G$ contains a Hamiltonian circle if and only if $G$ admits a co-Hamiltonian (face) sequence
in the sense of Definition~\ref{coh}.
\end{lemma}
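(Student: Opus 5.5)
The plan is to prove the two directions separately. The backward direction is immediate from the definitions, and the forward direction comes from iterating the Peeling Lemma.

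\emph{($\Leftarrow$)} Suppose $G$ admits a co-Hamiltonian sequence $(e_1,\dots,e_k)$. By Definition~\ref{coh}, the final graph $G_k$ contains exactly one Hamiltonian circle. Since $G_k$ is a spanning subgraph of $G$ (only edges were deleted), that circle is also a Hamiltonian circle of $G$. No further work is needed here.

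\emph{($\Rightarrow$)} Fix a Hamiltonian circle $C$ of $G$. We build the edge sequence greedily, maintaining the invariant that $C\subseteq G_t$ and $G_t$ is $2$-connected. As long as $\partial F_0(G_t)\neq C$, the Peeling Lemma (Lemma~\ref{lem:peeling}) applied to $G_t$ and $C$ supplies an edge $e_{t+1}\notin E(C)$ on the outer boundary of $G_t$ with $G_{t+1}=G_t-e_{t+1}$ still $2$-connected; the invariant persists since $e_{t+1}\notin C$. Each step deletes an edge of $E(G)\setminus E(C)$, a finite set, so the process stops at some $G_k$ with $\partial F_0(G_k)=C$. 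Each deleted edge lies on the outer boundary of $G_{t-1}$, so the induced face sequence is well defined as in Definition~\ref{coh}(2).

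It remains to verify the terminal conditions for $G_k$. Since $C$ is Hamiltonian and equals $\partial F_0(G_k)$, every vertex of $G_k$ is exterior, so $G_k$ is a $2$-connected simple outerplane graph. If $G$ is simple, Lemma~\ref{lem:weak-dual-tree-unique-ham} then gives that $G_k$ has exactly one Hamiltonian circle, namely $C$. In the degenerate case where $G_k=C$ has a single bounded face, the base case of that lemma already covers it.

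The main obstacle is this last step, which needs simplicity to invoke Lemma~\ref{lem:weak-dual-tree-unique-ham}. The paper's standing setting is simple graphs, and I would state that assumption explicitly. Otherwise a pair of parallel chords could yield two Hamiltonian circles. To handle that case I would also delete parallel copies lying on the outer boundary, but in the simple setting this issue does not arise.
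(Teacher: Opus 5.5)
Your proof is correct and follows the paper's argument. The backward direction is the same spanning-subgraph observation, and the forward direction iterates the Peeling Lemma while keeping $C$, stopping once the outer boundary equals $C$ (equivalently, once all faces outside $C$ have been merged into the outer face). Where the paper only asserts that the final graph has exactly one Hamiltonian circle ``by construction,'' you justify it via Lemma~\ref{lem:weak-dual-tree-unique-ham} and correctly flag that simplicity is needed; strictly speaking, the multigraph danger is an inner edge parallel to an edge of $C$, not a pair of parallel chords.
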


\begin{proof}

($\Rightarrow$)

Assume $G$ contains a Hamiltonian circle $C$, and let $H$ be the set of bounded faces inside the closed disk bounded by $C$.
Let $\mathcal{B}_{\mathrm{out}}:=\mathcal{B}\setminus H$ denote the bounded faces outside $C$.
If $\mathcal{B}_{\mathrm{out}}=\emptyset$, then $C=\partial F_0$ and the empty sequence is a co-Hamiltonian face sequence.
Otherwise, we iteratively delete edges to merge the faces of $\mathcal{B}_{\mathrm{out}}$ into the outer face,
while preserving $2$-connectedness and never deleting an edge of $C$.

At each step, the current graph $G_{t-1}$ is $2$-connected and still contains the Hamiltonian circle $C$
(as a subgraph), and $G_{t-1}\neq C$ because there remains at least one edge outside $C$.
Therefore, by the Peeling Lemma (Lemma~\ref{lem:peeling}), there exists an edge
\(e_t \notin E(C)\) that lies on the boundary of the outer face of \(G_{t-1}\)
such that deleting \(e_t\) preserves \(2\)-connectedness.
Define
\[
G_t:=G_{t-1}-e_t .
\]
Let $F_t$ be the bounded face of $G_{t-1}$ that is merged into the outer face when $e_t$ is deleted. Repeating this process  produces a co-Hamiltonian edge sequence
\((e_1,\dots,e_k)\) and the corresponding co-Hamiltonian face sequence
\((F_1,\dots,F_k)\). The process terminates when all faces in $\mathcal{B}_{\mathrm{out}}$ have been merged into the outer face, i.e. when the bounded faces of the final graph are exactly $H$.
By construction, every vertex of the final graph is exterior and it contains exactly one Hamiltonian circle (namely $C$), so $H$ is the Hamiltonian set determined by the sequence.
Hence $G$ admits a co-Hamiltonian (face) sequence.

($\Leftarrow$)
Conversely, assume $G$ admits a co-Hamiltonian edge sequence $E'=(e_1,\dots,e_k)$, with induced face sequence
$L=(F_1,\dots,F_k)$ and final graph $G_k$ as in Definition~\ref{coh}.
By definition, $G_k$ contains exactly one Hamiltonian circle. In particular, $G_k$ contains a Hamiltonian circle.
Since $G_k$ is a spanning subgraph of $G$ (obtained by deleting edges only), that Hamiltonian circle is also a
Hamiltonian circle of $G$.
Therefore $G$ contains a Hamiltonian circle.
\end{proof}

\subsection{Grid graphs}
Let $\Sigma$ be an $m \times n$ signed grid. We label the vertices by
\[
V = \{(i,j) \mid i \in \{1,2,\dots,m\},\; j \in \{1,2,\dots,n\}\}.
\] 
The grid contains $(m-1)(n-1)$ unit boxes (or faces). 
Let $B$ denote the set of all boxes. 
We label the boxes by
\[
B = \{[i,j] \mid i \in \{1,2,\dots,m-1\},\; j \in \{1,2,\dots,n-1\}\}.
\]

We label the edges of the grid as follows.  
For each vertex $(i,j)$ with $1 \le i \le m$ and $1 \le j \le n$:

\begin{enumerate}

\item the \emph{horizontal edge} joining $(i,j)$ and $(i,j+1)$,
for $1 \le i \le m$ and $1 \le j \le n-1$,
is denoted by
\[
e^h_{i,j}=\big((i,j),(i,j+1)\big);
\]

\item the \emph{vertical edge} joining $(i,j)$ and $(i+1,j)$,
for $1 \le i \le m-1$ and $1 \le j \le n$,
is denoted by
\[
e^v_{i,j}=\big((i,j),(i+1,j)\big).
\]
\end{enumerate}

We present two examples: one that contains a Hamiltonian circle and one that does not.

In Figure~\ref{3by4}, there are exactly two interior vertices, namely $(2,2)$ and $(2,3)$. 
All other vertices lie on the outer face.
If we remove the vertical edge $e^{v}_{1,2}$, equivalently remove the box $[1,2]$, 
then it is straightforward to verify that the resulting graph contains exactly one Hamiltonian circle.
Moreover, in the resulting graph the vertices $(2,2)$ and $(2,3)$ become exterior vertices.

This illustrates a general phenomenon.
Removing an edge incident with the outer face converts two interior vertices into exterior vertices
in the resulting graph.

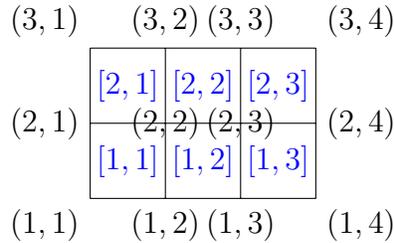
\begin{figure}[H]
\centering
\begin{tikzpicture}[scale=1]
  \draw (0,0) grid (3,2);

  \node[below left]  at (0,0) {$(1,1)$};
  \node[below]       at (1,0) {$(1,2)$};
  \node[below]       at (2,0) {$(1,3)$};
  \node[below right] at (3,0) {$(1,4)$};

  \node[left]  at (0,1) {$(2,1)$};
  \node        at (1,1) {$(2,2)$};
  \node        at (2,1) {$(2,3)$};
  \node[right] at (3,1) {$(2,4)$};

  \node[above left]  at (0,2) {$(3,1)$};
  \node[above]       at (1,2) {$(3,2)$};
  \node[above]       at (2,2) {$(3,3)$};
  \node[above right] at (3,2) {$(3,4)$};

  \node[blue] at (0.5,0.5) {$[1,1]$};
  \node[blue] at (0.5,1.5) {$[2,1]$};

  \node[blue] at (1.5,0.5) {$[1,2]$};
  \node[blue] at (1.5,1.5) {$[2,2]$};

  \node[blue] at (2.5,0.5) {$[1,3]$};
  \node[blue] at (2.5,1.5) {$[2,3]$};
\end{tikzpicture}
\caption{3 by 4 grid with face (box) labels in blue and vertex labels in parentheses}
\label{3by4}
\end{figure}

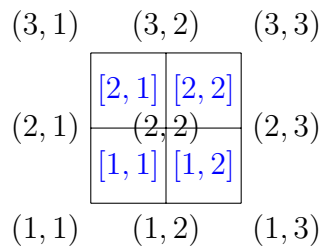
\begin{figure}[H]
\centering
\begin{tikzpicture}[scale=1]
  \draw (0,0) grid (2,2);

  \node[below left]  at (0,0) {$(1,1)$};
  \node[below]       at (1,0) {$(1,2)$};
  \node[below right] at (2,0) {$(1,3)$};

  \node[left]  at (0,1) {$(2,1)$};
  \node        at (1,1) {$(2,2)$};
  \node[right] at (2,1) {$(2,3)$};

  \node[above left]  at (0,2) {$(3,1)$};
  \node[above]       at (1,2) {$(3,2)$};
  \node[above right] at (2,2) {$(3,3)$};

  \node[blue] at (0.5,0.5) {$[1,1]$};
  \node[blue] at (1.5,0.5) {$[1,2]$};
  \node[blue] at (0.5,1.5) {$[2,1]$};
  \node[blue] at (1.5,1.5) {$[2,2]$};

\end{tikzpicture}
\caption{3 by 3 grid with labels $(x,y)$ replaced by $(y,x)$}
\label{3by3}
\end{figure}

Note that not every grid graph contains a Hamiltonian circle.
For example, consider the graph in Figure~\ref{3by3}.
If we remove any exterior edge from the graph, say the vertical edge $e^{v}_{1,2}$, then the vertex $(1,3)$ has degree~$1$.
Consequently, the remaining graph cannot contain a Hamiltonian circle.

Alternatively, observe that removing an exterior edge from a grid graph turns two interior vertices into exterior vertices.
However, the graph in Figure~\ref{3by3} has only one interior vertex.
Thus, after the removal of any exterior edge, the resulting graph has no interior vertex structure compatible with a Hamiltonian circle.

The combinatorial structure of rectangular grid graphs imposes strong parity constraints. In particular, the parity of the interior vertices yields an immediate obstruction to Hamiltonicity, as stated in the following lemma.

\begin{lemma}
A rectangular $m\times n$ grid graph with an odd number of interior vertices cannot contain a Hamiltonian circle.
\end{lemma}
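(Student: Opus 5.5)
The plan is to reduce the statement to the standard bipartite parity obstruction. Interior vertices of the $m\times n$ grid are those $(i,j)$ with $2\le i\le m-1$ and $2\le j\le n-1$, so there are $(m-2)(n-2)$ of them (for $m,n\ge 2$). First I will show that this number is odd only if both $m$ and $n$ are odd. If $m=2$ or $n=2$ there are no interior vertices, and $0$ is even. Otherwise $(m-2)(n-2)$ is odd exactly when $m-2$ and $n-2$ are both odd, that is, when $m$ and $n$ are both odd. Hence the total number of vertices $mn$ is odd.

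Next I use the bipartition of the grid. Color $(i,j)$ by the parity of $i+j$. Every horizontal edge $e^h_{i,j}$ and every vertical edge $e^v_{i,j}$ changes exactly one coordinate by $1$, so each edge joins vertices of opposite colors. The grid is therefore bipartite.

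Every circle in a bipartite graph has even length, because its vertices alternate colors along the circle. A Hamiltonian circle passes through all $mn$ vertices, so its length is $mn$, which we showed is odd. This is a contradiction, so no Hamiltonian circle exists. In the degenerate cases $m=1$ or $n=1$ the grid is a path and has no circle at all, so the statement holds trivially; I will mention this for completeness.

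The only step that needs care is the count of interior vertices. I must check that the exterior vertices are exactly those on the rows $i\in\{1,m\}$ and columns $j\in\{1,n\}$, which is immediate from the standard embedding whose outer boundary circle is the rectangle. No earlier lemma of the paper is needed beyond this identification of the outer face; the face-based machinery (co-Hamiltonian sequences) is not used here.
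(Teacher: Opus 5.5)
Your proposal is correct and takes essentially the same route as the paper: count the $(m-2)(n-2)$ interior vertices to force $m$ and $n$ odd, then use the parity-of-$(i+j)$ bipartition to rule out a Hamiltonian circle on an odd number $mn$ of vertices. The differences are cosmetic: you phrase the obstruction as ``every circle in a bipartite graph has even length'' where the paper uses $|X|=|Y|$, and you also handle the degenerate cases $m,n\in\{1,2\}$, which the paper leaves implicit.
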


\begin{proof}
In the $m\times n$ grid graph, the interior vertices are exactly those $(i,j)$ with
$2\le i \le m-1$ and $2\le j \le n-1$, hence the number of interior vertices is
\[
(m-2)(n-2).
\]
If $(m-2)(n-2)$ is odd, then both $m-2$ and $n-2$ are odd, so $m$ and $n$ are odd. Therefore the total number of vertices is
\[
|V|=mn,
\]
which is odd.

Now the grid graph is bipartite under the coloring given by the parity of $i+j$:
\[
(i,j)\in X \iff i+j \text{ is even}, 
\qquad
(i,j)\in Y \iff i+j \text{ is odd}.
\]
Every edge joins a vertex in $X$ to a vertex in $Y$, so this is a bipartition.

In any circle of a bipartite graph, vertices alternate between the two parts, so every circle uses the same number of vertices from $X$ and from $Y$. In particular, a Hamiltonian circle would use all vertices, forcing
\[
|X|=|Y|.
\]
But if $|V|=|X|+|Y|$ is odd, then $|X|\neq |Y|$, a contradiction. Hence the grid graph cannot contain a Hamiltonian circle.
\end{proof}

We illustrate the next lemma (Lemma~\ref{lem:coham}) with a $4\times6$ grid graph: after deleting the sequence $L$, the remaining graph is shown in Figure~\ref{4by6}.

\begin{figure}[H]
\centering
\begin{tikzpicture}[scale=1, line cap=round, line join=round]

\foreach \x in {0,...,5}{
  \foreach \y in {0,...,3}{
    \coordinate (v\x\y) at (\x,\y);
  }
}


\foreach \x in {0,...,4}{
  \draw (v\x2)--(v\the\numexpr\x+1\relax2);
  \draw (v\x3)--(v\the\numexpr\x+1\relax3);
}
\foreach \x in {0,...,5}{
  \draw (v\x2)--(v\x3);
}

\foreach \x in {0,1,2,3,4,5}{
  \draw (v\x2)--(v\x1)--(v\x0);
}

\draw (v01)--(v11);  \draw (v00)--(v10);
\draw (v21)--(v31);  \draw (v20)--(v30);
\draw (v41)--(v51);  \draw (v40)--(v50);

\foreach \x in {0,...,5}{
  \foreach \y in {0,...,3}{
    \fill (v\x\y) circle (1.6pt);
  }
}

\node[left]       at (v03) {$(4,1)$};
\node[right]      at (v53) {$(4,6)$};
\node[left]       at (v00) {$(1,1)$};
\node[right]      at (v50) {$(1,6)$};

\node[left] at (1,0.5) {$[1,1]$};
\node[left] at (5,2.5) {$[3,5]$};

\end{tikzpicture}
\caption{The resulting graph obtained from a $4\times6$ grid after deleting a co-Hamiltonian sequence.}
\label{4by6}
\end{figure}
\begin{lemma}\label{lem:coham}
Let $\Sigma$ be an $m\times n$ rectangular grid with $n$ even.
There exists a co-Hamiltonian sequence $L$ consisting of
\[
\Big(\frac n2-1\Big)(m-2)
\]
boxes removed from the outside of $\Sigma$ such that the remaining graph contains exactly one Hamiltonian circle.
\end{lemma}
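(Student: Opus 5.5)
The construction is a comb-shaped Hamiltonian circle, read off from Figure~\ref{4by6}. Keep the top row of boxes $[m-1,j]$ for $1\le j\le n-1$ as the spine. Below it, keep $n/2$ vertical teeth: the boxes $[i,j]$ with $j$ odd and $1\le i\le m-2$. Remove every box $[i,j]$ with $j$ even, $2\le j\le n-2$, and $1\le i\le m-2$. These are the gaps between the teeth. There are $\frac n2-1$ gap columns, each of height $m-2$, which gives the stated count $(\frac n2-1)(m-2)$.

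The order of removal must satisfy Definition~\ref{coh}. I will take the gap columns one at a time, for $j=2,4,\dots,n-2$. Within gap column $j$ I go bottom to top, $i=1,2,\dots,m-2$.

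\textbf{Step 1.} To remove box $[i,j]$, delete its lower horizontal edge $e^h_{i,j}$. For $i=1$ this edge lies on the bottom boundary of the grid. For $i>1$ it lies on the outer face, because box $[i-1,j]$ has already been merged into the outer face. So condition (1) holds, and the face merged at this step is exactly $[i,j]$.

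\textbf{Step 2.} Check $2$-connectedness after each deletion. I will exhibit an explicit Hamiltonian circle $C$ that never uses a deleted edge; the argument of Lemma~\ref{lem:peeling} then shows that every intermediate graph $G_t$ is $2$-connected. Let $C$ be the boundary of the comb region. It goes across the top row from $(m,1)$ to $(m,n)$ and down column $n$. It then snakes along the bottom: down column $j$ and up column $j+1$ for each tooth, using the bottom edges $e^h_{1,j}$ with $j$ odd. The rungs $e^h_{m-1,j}$ with $j$ even join the tooth tops along row $m-1$. Finally it goes up column $1$. This circle visits every vertex. Columns $j$ and $j+1$ of a tooth cover rows $1$ to $m-1$, and row $m$ is covered by the top path. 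The only edges it avoids are horizontal edges $e^h_{i,j}$ with $j$ even and $i\le m-2$, horizontal edges inside teeth, and interior vertical edges. In particular, all the deleted edges $e^h_{i,j}$ ($j$ even, $i\le m-2$) lie off $C$. I should double-check the boundary-edge indexing so that $e^h_{m-1,j}$ is kept.

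\textbf{Step 3.} Show that the final graph $G_k$ is outerplane and has exactly one Hamiltonian circle. Every vertex lies on column $1$, column $n$, row $m$, or the boundary of a gap, so all vertices are exterior. $G_k$ is also $2$-connected, since it contains $C$. Lemma~\ref{lem:weak-dual-tree-unique-ham} then gives uniqueness of the Hamiltonian circle, as long as $G_k$ is simple and has a bounded face, which is clear.

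The main obstacle is the bookkeeping in Steps 1 and 2. In Step 1 I must make sure each deleted edge really borders the outer face at the moment of deletion, not merely eventually. In Step 2 I must verify that $C$ is a genuine simple circle and covers rows $1$ to $m-1$ in every tooth. I would argue this by induction on the number of teeth, with the case $n=2$, where nothing is removed, as the trivial base.
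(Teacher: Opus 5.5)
Your proposal is correct and follows the paper's proof: you remove the same $(\frac n2-1)(m-2)$ boxes $[i,j]$ with $j$ even, $2\le j\le n-2$, $1\le i\le m-2$, and you get uniqueness from Lemma~\ref{lem:weak-dual-tree-unique-ham}. The paper orders the boxes row by row and you order them column by column; both orders work, because each box's lower edge borders the outer face once the box below it has been merged, and your explicit comb circle $C$ avoiding all deleted edges supplies the step-by-step outer-face and $2$-connectedness checks that the paper only asserts for the final graph (minor slip: after going down column $n$, each tooth is traversed down column $j+1$ and up column $j$, not the reverse).
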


\begin{proof}
We give an explicit choice of $L$ and prove that it forces a unique Hamiltonian circle.

\medskip
\noindent\textbf{Construction of $L$.}
View $\Sigma$ as an $(m-1)\times (n-1)$ array of unit boxes labeled
\[
[i,j]\qquad (1\le i\le m-1,\ 1\le j\le n-1).
\]
Since $n$ is even, the set $\{2,4,6,\dots,n-2\}$ has size $\frac n2-1$.

For each row $i\in\{1,2,\dots,m-2\}$, remove the boxes
\[
[i,2],\ [i,4],\ [i,6],\ \dots,\ [i,n-2] \text{ in this order}.
\]
Thus we remove exactly $\frac n2-1$ boxes in each of the $m-2$ rows, so
\[
|L|=\Big(\frac n2-1\Big)(m-2).
\]

\medskip
Observe that the resulting graph $G_L$ is $2$-connected and that every vertex of $G_L$ is an exterior vertex.
Hence $G_L$ satisfies the hypotheses of Lemma~\ref{lem:weak-dual-tree-unique-ham}.
Therefore $G_L$ contains a \emph{unique} Hamiltonian circle, namely the boundary circle of its outer face.
In particular, the remaining graph contains exactly one Hamiltonian circle, as desired.
\end{proof}

Notation. For two sets of boxes $H_1'$ and $H_2'$, the symmetric difference
$H_1' \triangle H_2'$ denotes the set of boxes that are contained in $H_1'$ or $H_2'$, but not in both.

\begin{lemma}\label{lemma123} 
Let $\Sigma$ be an $m\times n$ grid with $m,n>3$ and $n$ even.
Let $H_1$ and $H_2$ be two distinct Hamiltonian circles with corresponding Hamiltonian sets
$H_1'$ and $H_2'$, respectively. Suppose
\[
H_1' \triangle H_2'=\{a,b\},
\]
where $a\in H_1'\setminus H_2'$ and $b\in H_2'\setminus H_1'$ are boxes. Then
\[
\sigma(H_1)=\sigma(H_2)\quad \Longleftrightarrow\quad \sigma(a)=\sigma(b).
\]
\end{lemma}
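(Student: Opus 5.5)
The plan is to express the sign of each Hamiltonian circle as a product of box signs, using Lemma~\ref{facegraph}, and then cancel the common factor. Let $G_1$ be the final graph of a co-Hamiltonian sequence determining $H_1$. By Definition~\ref{coh}, $G_1$ is $2$-connected, its bounded faces are exactly the boxes of $H_1'$, and its outer boundary circle is $H_1$; define $G_2$ in the same way. Equivalently, we may regard $H_i$ as a circle in the grid and let $H_i'$ be the set of boxes in the closed disk bounded by $H_i$, as in the proof of Lemma~\ref{thm:ham-iff-coham-seq}. The graph formed by $H_i$ together with the grid edges inside it is $2$-connected and plane, its outer boundary circle is $H_i$, and its bounded faces are precisely the unit boxes in $H_i'$. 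This holds because every box of the grid is a face of the plane grid, and the disk bounded by $H_i$ is a union of such faces.

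Applying Lemma~\ref{facegraph} to each of these two subgraphs, with the restricted signing $\sigma$, gives
\[
\sigma(H_1)=\prod_{f\in H_1'}\sigma(f),\qquad \sigma(H_2)=\prod_{f\in H_2'}\sigma(f).
\]
Here the face sign of a box depends only on its four boundary edges, so it is the same in either subgraph as in $\Sigma$.

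Next we write $H_1'=K\cup\{a\}$ and $H_2'=K\cup\{b\}$ with $K=H_1'\cap H_2'$. This is exactly what the hypothesis $H_1'\triangle H_2'=\{a,b\}$, $a\in H_1'\setminus H_2'$, $b\in H_2'\setminus H_1'$ means. Setting $\pi=\prod_{f\in K}\sigma(f)\in\{+,-\}$, we get $\sigma(H_1)=\pi\,\sigma(a)$ and $\sigma(H_2)=\pi\,\sigma(b)$. Since $\pi^2=+$, it follows that $\sigma(H_1)\sigma(H_2)=\sigma(a)\sigma(b)$, and both directions of the equivalence follow at once.

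The main point requiring care is the first step: justifying that the bounded faces of the subgraph enclosed by $H_i$ are exactly the boxes of $H_i'$, each with its original boundary. The plan is to argue from the definition of the Hamiltonian set as the faces not merged into the outer face. Deleting outer-face edges only merges faces into the outer face and never subdivides or alters the remaining bounded faces, so each surviving face is an original unit box with its original four edges. The hypotheses $m,n>3$ and $n$ even are not needed for the sign computation itself; they only ensure that Hamiltonian circles exist, via Lemma~\ref{lem:coham}.
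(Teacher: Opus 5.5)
Your proposal is correct and follows essentially the same route as the paper: write $H_1'=K\cup\{a\}$ and $H_2'=K\cup\{b\}$, use Lemma~\ref{facegraph} to get $\sigma(H_i)=\prod_{f\in H_i'}\sigma(f)$, and cancel the common factor coming from $K$. Your extra check that the subgraph bounded by $H_i$ is $2$-connected and that its bounded faces are exactly the original boxes of $H_i'$ is a detail the paper leaves implicit when it writes $\sigma(H_1)=\sigma(H_1')$.
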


\begin{proof}
Let $S=H_1'\cap H_2'$. Since $H_1' \triangle H_2'=\{a,b\}$, we have
\[
H_1' = S\cup\{a\}, \qquad H_2' = S\cup\{b\}.
\]
Write $S=\{c_1,c_2,\dots,c_k\}$.  
Since $\sigma$ is multiplicative, by Lemma \ref{facegraph} we obtain
\[
\sigma(H_1)=\sigma(H_1')=\Big(\prod_{i=1}^k \sigma(c_i)\Big)\sigma(a),
\qquad
\sigma(H_2)=\sigma(H_2')=\Big(\prod_{i=1}^k \sigma(c_i)\Big)\sigma(b).
\]
Therefore,
\[
\sigma(H_1)=\sigma(H_2)
\iff
\Big(\prod_{i=1}^k \sigma(c_i)\Big)\sigma(a)
=
\Big(\prod_{i=1}^k \sigma(c_i)\Big)\sigma(b)
\iff
\sigma(a)=\sigma(b),
\]
as claimed.
\end{proof}

\begin{theorem}\label{thm:all-same-sign}
Let $\Sigma$ be an $m\times n$ grid with $m$ even and $m,n>3$.
 All Hamiltonian circles in $\Sigma$ have the same sign if and only if all boxes of $\Sigma$ except the four corner boxes have the same sign.
\end{theorem}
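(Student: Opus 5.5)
The plan is to use the face-product formula of Lemma~\ref{facegraph}: if $H$ is a Hamiltonian circle with Hamiltonian set $H'$ (the boxes enclosed by $H$), then $\sigma(H)=\prod_{f\in H'}\sigma(f)$. Both directions of the theorem then reduce to controlling which boxes can lie inside a Hamiltonian circle. Since $m$ is even, the comb construction of Lemma~\ref{lem:coham}, with the roles of rows and columns exchanged, shows that Hamiltonian circles exist. So the statement is not vacuous.

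\textbf{Step 1 (corner boxes are always enclosed).} Each corner vertex, say $(1,1)$, has degree $2$, so every Hamiltonian circle $H$ uses both edges $e^h_{1,1}$ and $e^v_{1,1}$. The angle between these two edges at $(1,1)$ that lies on the bounded side is exactly the box $[1,1]$. Hence $[1,1]\in H'$, and likewise for the other three corners. The product $\kappa$ of the four corner signs is therefore a common factor of $\sigma(H)$ for every Hamiltonian circle $H$.

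\textbf{Step 2 (``if'' direction).} We count the boxes enclosed by a Hamiltonian circle. $H$ is a lattice polygon with all $mn$ vertices on its boundary and no lattice points in its interior. By Pick's theorem, the enclosed area is
\[
|H'| = \tfrac{mn}{2}-1 .
\]
Alternatively, Euler's formula applied to the outerplane graph $H\cup(\text{edges inside }H)$ gives the same count. If every non-corner box has sign $s$, then
\[
\sigma(H)=\kappa\, s^{\,mn/2-5}
\]
for every Hamiltonian circle $H$, which is independent of $H$. Hence all Hamiltonian circles have the same sign.

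\textbf{Step 3 (``only if'' direction).} By Lemma~\ref{lemma123}, it suffices to show the following. The non-corner boxes form a connected graph under a relation $a\sim b$, where $a\sim b$ means that there exist Hamiltonian circles $H_1,H_2$ with $H_1'\triangle H_2'=\{a,b\}$. If all Hamiltonian circles have equal sign, then $\sigma(a)=\sigma(b)$ whenever $a\sim b$. Connectivity of $\sim$ would then force all non-corner boxes to share a sign.

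We aim to prove $a\sim b$ for every pair of edge-adjacent non-corner boxes. The non-corner boxes are edge-connected once $m,n>3$, so this gives the required connectivity. For the swap, we use a local toggle. Take a Hamiltonian circle $H_1$ in which $a$ is enclosed and $b$ is not. Choose it so that in a small neighbourhood (a $2\times 3$ or $3\times 2$ block of boxes containing $a,b$), $H_1$ runs along a path that can be rerouted around $b$ instead of $a$. The rerouting should keep every vertex covered and change the enclosed set by exactly $\{a,b\}$. Concretely, we would start from the comb circle of Lemma~\ref{lem:coham} (teeth along the even direction). We shift the tooth pattern so that the shared edge of $a$ and $b$ becomes the place where a tooth ends. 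Then we observe that moving the tip of that tooth by one unit exchanges $a$ and $b$. Horizontal and vertical adjacencies need separate versions of this construction.

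\textbf{Main obstacle.} The hardest part is Step 3's construction. We must produce, for every adjacent non-corner pair, including pairs touching the boundary or adjacent to a corner box, an explicit pair of Hamiltonian circles differing only by $\{a,b\}$. The boundary and near-corner cases are where the hypotheses $m,n>3$ and $m$ even will be used. I expect to handle them by a small case analysis of the positions of $a$ and $b$ relative to the four sides, reflecting the grid to reduce the number of cases.
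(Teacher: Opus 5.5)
Your Steps 1 and 2 are correct. The Pick's-theorem count $|H'|=mn/2-1$, together with the degree-$2$ corner argument, is a cleaner proof of the ``if'' direction than the paper's appeal to observations from the forward direction.

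The gap is Step 3. It carries the whole ``only if'' direction, and you state it as an aim rather than prove it. Your one concrete mechanism is sliding the end of a tooth, i.e.\ moving the single retained box of a line that otherwise lies entirely outside the circle. That only exchanges two consecutive boxes of such a line, and it cannot simply be ``shifted'' to cover every adjacent pair, for two reasons.

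\begin{itemize}
\item A non-corner box of the first or last row or column, when it lies outside the circle, always hangs directly from the outer face. So it is never an inner cell of a tooth. Exchanging, say, $[1,j]$ with $[1,j+1]$, or $[k,1]$ with $[k+1,1]$, changes which two interior vertices that box frees. Some other removed boxes must then be re-attached, which is not a tooth move.
\item When $n$ is odd, no tooth line can sit in an odd row $i$ with $3\le i\le m-3$. Such a line would free exactly the interior vertices of rows $i$ and $i+1$. That leaves $(i-2)(n-2)$ interior vertices below it, an odd number, which cannot be split into the pairs freed by the remaining removed boxes.
\end{itemize}

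Such exchanges do exist, but they are non-local re-rootings rather than toggles of a comb. For example, in the $6\times5$ grid the removed sets $\{[1,2],[2,2],[2,3],[3,2],[4,2],[4,3]\}$ and $\{[1,2],[2,2],[2,3],[3,3],[4,2],[4,3]\}$ exchange $[3,2]$ and $[3,3]$. The chain $[4,2],[4,3]$ hangs from $[3,2]$ in the first set and from $[3,3]$ in the second.

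What is missing is an explicit, verified family of Hamiltonian sets and exchanges that connects all non-corner boxes. The paper's proof supplies exactly this:
\begin{itemize}
\item the combs $L$ and $L_2$, and for $n$ odd the sets $L$ and $L'$ with an alternating boundary column;
\item moves that slide the retained box of a tooth column;
\item a move that re-hangs the column-$2$ tooth from $[2,1]$ to $[k,1]$;
\item moves that trade $[2,j]$ with $[1,j\pm1]$.
\end{itemize}
These moves need not exchange adjacent boxes, so the paper never has to realize every adjacent pair, which is the case analysis you would still have to carry out.
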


\begin{proof}
Suppose all Hamiltonian circles in $\Sigma$  have the same sign.

We will show  that all boxes of $\Sigma$, except the four corner boxes, have the same sign.
We prove this direction in two cases.

\medskip
\noindent\emph{Case 1: $n$ is even.}
Let $\mathcal{B}$ denote the set of all bounded faces of the graph $\Sigma$.

Let
\[
L:=\{[i,j]\mid i\in \{1,2,\dots,m-2\},\ j\in \{2,4,\dots,n-2\}\}
\]
be a co-Hamiltonian set of $\mathcal{B}$, and let $H=\mathcal{B}\setminus L$.
Fix a column index  $j\in\{2,4,\dots,n-2\}$, and each $i\in\{1,2,\dots,m-2\}$,
we may replace the box $[m-1,j]$ in $H$ by the box $[i,j]$
(that is, delete $[m-1,j]$ and add $[i,j]$).
Denote the resulting Hamiltonian set by $H_{i,j}$.
 Since the Hamiltonian circle in $H_{i,j}$
and the Hamiltonian circle in $H$ have the same signs and $H_{i,j}\triangle H=\{[i,j],[m-1,j]\}$ , by Lemma \ref{lemma123}, $\sigma([i,j])=\sigma([m-1,j])$. Since $i$ is arbitrary, all the boxes in the column $j$ have the same sign.

Consider $H$ again. Delete the box $[2,1]$ and add the box $[1,2]$
(equivalently, move the box $[2,1]$ to position $[1,2]$).
Denote the resulting Hamiltonian set by $H_2$.
Then, in $H_2$, for any $k\in\{3,4,\dots,m-2\}$, we may replace the box $[k,1]$
by the box $[2,1]$ (that is, move $[k,1]$ to position $[2,1]$).
Denote the resulting Hamiltonian set by $H_{k,1}$. Since all the Hamiltonian circles have the same sign and $H_{k,1}\triangle H_2=\{[2,1],[k,1]\}$, by Lemma \ref{lemma123}, $\sigma([k,1])=\sigma([2,1])$. Hence, all non-corner boxes in the first column must have the same sign. Similarly, all non-corner boxes in the last column must have the same sign.

Next, consider $\Sigma$ again. Let
\[
L_2:= \{[i,j]\mid i\in \{1,2,\dots,m-2\},\ j\in \{3,5,\dots,n-3\}\}
\;\cup\;
\{[i,1],[i,n-1]\mid i\in \{2,4,\dots,m-2\}\}.
\]
Let $H'=\mathcal{B}\setminus L_2$.
Then $H'$ contains exactly one Hamiltonian circle.

\medskip
\noindent\emph{Row movements in $H'$.}
Fix an odd column index $j\in\{3,5,\dots,n-3\}$.
For any $i\in\{1,2,\dots,m-2\}$, replace the box $[m-1,j]$ in $H'$
by the box $[i,j]$, and denote the resulting Hamiltonian set by $H_{i,j}$. Since we assume that all Hamiltonian circles in $\Sigma$ have the same sign, by Lemma~\ref{lemma123} we have
$\sigma([i,j])=\sigma([m-1,j])$.
Since $i$ is arbitrary, we conclude that all boxes in column $j$ have the same sign.

\medskip
\noindent\emph{Relations between different columns.}
Finally, we compare boxes lying in different columns.
In $H$, fix a number $j\in\{1,3,\dots,n-3\}$, we may move the box $[2,j]$
to position $[1,j+1]$. Denote the resulting Hamiltonian set by $H_{1,j+1}$. Since all the Hamiltonian circles have the same sign, 
by Lemma \ref{lemma123}, $\sigma([2,j])=\sigma([1,j+1])$. This implies the non-corner box sign in column $l$ is the same as the non-corner box sign in column $l+1$, for $l$ is odd.

Similarly, in $H$ again,  for each $j\in\{3,5,\dots,n-1\}$, we may move the box $[2,j]$
to position $[1,j-1]$. Denote the resulting Hamiltonian set by $H_{1,j-1}$.
With the same reasoning, $\sigma([2,j])=\sigma([1,j-1])$.  This implies the non-corner box sign in column $l$ is the same as the non-corner box sign in column $l+1$, for $l$ is even.
Hence, any two non-corner box in different column will have the same sign.

\medskip
\noindent\emph{Case 2: $n$ is odd.}

Let
\[
L:=\{[i,j]\mid i\in \{1,2,\dots,m-2\},\ j\in \{2,4,\dots,n-3\}\}
\;\cup\;
\{[i,n-1]\mid i\in \{2,4,\dots,m-2\}\}
\]
be a co-Hamiltonian set of $\Sigma$, and let $H=\mathcal{B}\setminus L$.
Then $H$ contains exactly one Hamiltonian circle.

Using the same row and column movement arguments as in Case~1,
we obtain that all non-corner boxes in the first $n-2$ columns
must have the same sign since all Hamiltonian circles in $\Sigma$
to have the same sign.

Next, consider a different co-Hamiltonian set
\[
L':=\{[i,j]\mid i\in \{1,2,\dots,m-2\},\ j\in \{3,5,\dots,n-2\}\}
\;\cup\;
\{[i,1]\mid i\in \{2,4,\dots,m-2\}\}.
\]
Using the same row and column movement arguments as in Case~1,
we conclude that all non-corner boxes in columns $2,3,\dots,n-1$
must have the same sign, since all Hamiltonian circles in $\Sigma$
are assumed to have the same sign.

Hence, from both cases we conclude that all boxes of $\Sigma$, except the four corner boxes, must have the same sign.

Conversely, assume that all boxes of $\Sigma$, except the four corner boxes, have the same sign.
We show that all Hamiltonian circles in $\Sigma$ have the same sign.

From the forward direction, we make the following two observations:
\begin{enumerate}
\item every Hamiltonian set contains all four corner boxes;
\item all Hamiltonian sets contain the same number of boxes.
\end{enumerate}

Let $H'$ be any Hamiltonian set, and let $C$ denote its corresponding Hamiltonian circle.
By Lemma \ref{facegraph},
\[
\sigma(C)=\prod_{[i,j]\in H'} \sigma([i,j]).
\]
Write
\[
H' = \{\text{four corner boxes}\} \cup \{\text{non-corner boxes in } H'\}.
\]

Since all non-corner boxes have the same sign, say $\alpha$, and since every Hamiltonian set
contains the same number of non-corner boxes, the contribution of the non-corner boxes to
$\sigma(C)$ is the same for every Hamiltonian circle.
Moreover, the contribution of the four corner boxes is fixed, since every Hamiltonian set
contains all four of them.

Therefore, for any two Hamiltonian circles $C_1$ and $C_2$, we have
\[
\sigma(C_1)=\sigma(C_2).
\]
Hence, all Hamiltonian circles in $\Sigma$ have the same sign.

\end{proof}

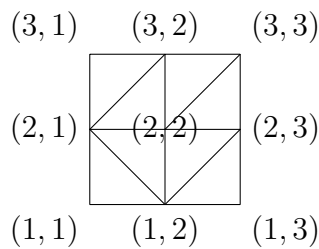
\begin{figure}[H]
\centering
\begin{tikzpicture}[scale=1]
  \draw (0,0) grid (2,2);

  \draw (0,1) -- (1,0);
  \draw (1,0) -- (2,1);
  \draw (0,1) -- (1,2);
  \draw (1,1) -- (2,2);

  \node[below left]  at (0,0) {$(1,1)$};
  \node[below]       at (1,0) {$(1,2)$};
  \node[below right] at (2,0) {$(1,3)$};

  \node[left]  at (0,1) {$(2,1)$};
  \node        at (1,1) {$(2,2)$};
  \node[right] at (2,1) {$(2,3)$};

  \node[above left]  at (0,2) {$(3,1)$};
  \node[above]       at (1,2) {$(3,2)$};
  \node[above right] at (2,2) {$(3,3)$};

\end{tikzpicture}
\caption{A $3\times 3$ grid with diagonals added to each square }
\label{3by3triangle3-swapped}
\end{figure}

\subsection{Triangulated grid graphs}

The $3\times 3$ grid graph does not contain a Hamiltonian circle (Figure~\ref{3by3}).
If we add a diagonal to each unit square, we obtain the triangulated grid shown in
Figure~\ref{3by3triangle3-swapped}, whose bounded faces are triangles.

Let $G_\triangle$ denote this triangulated grid.  Deleting a boundary edge of $G_\triangle$
merges one boundary triangle into the outer face.  For example, if we delete the horizontal edge
$e^h_{3,2}$ (as in Figure~\ref{3by3triangle3-swapped}), then the resulting graph contains a unique
Hamiltonian circle $C$.

In Figure~\ref{face22} below, deleting the vertical edge $e^v_{2,3}$, equivalently deleting the negative face on the right hand side, leaves a graph that is still $2$-connected, and it contains a unique Hamiltonian circle.
In the corresponding face graph, the vertex labeled $-2$ on the right is removed.
Consequently, the resulting face graph is a tree.

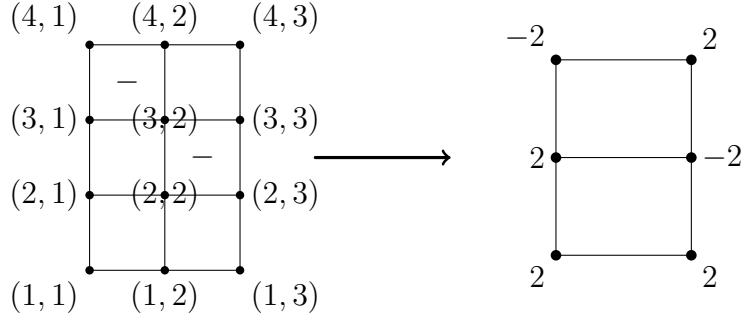
\begin{figure}[H]
\centering
\begin{tikzpicture}[line cap=round, line join=round, scale=1]

\begin{scope}[shift={(0,0)}]
  \draw (0,0) rectangle (2,3);
  \draw (1,0)--(1,3);
  \draw (0,1)--(2,1);
  \draw (0,2)--(2,2);

  \foreach \x in {0,1,2}{
    \foreach \y in {0,1,2,3}{
      \fill (\x,\y) circle (1.6pt);
    }
  }

 \node[below left]  at (0,0) {$(1,1)$};
\node[below]       at (1,0) {$(1,2)$};
\node[below right] at (2,0) {$(1,3)$};

\node[left]  at (0,1) {$(2,1)$};
\node        at (1,1) {$(2,2)$};
\node[right] at (2,1) {$(2,3)$};

\node[left]  at (0,2) {$(3,1)$};
\node        at (1,2) {$(3,2)$};
\node[right] at (2,2) {$(3,3)$};

\node[above left]  at (0,3) {$(4,1)$};
\node[above]       at (1,3) {$(4,2)$};
\node[above right] at (2,3) {$(4,3)$};

  \node at (0.5,2.5) {$-$};
  \node at (1.5,1.5) {$-$};
\end{scope}

\draw[->, line width=1.2pt] (3.0,1.5) -- (4.8,1.5);

\begin{scope}[shift={(6.2,0.2)}]
  \coordinate (a) at (0,2.6);
  \coordinate (b) at (1.8,2.6);
  \coordinate (c) at (0,1.3);
  \coordinate (d) at (1.8,1.3);
  \coordinate (e) at (0,0);
  \coordinate (f) at (1.8,0);

  \draw (a)--(b);
  \draw (c)--(d);
  \draw (e)--(f);
  \draw (a)--(c)--(e);
  \draw (b)--(d)--(f);

  \foreach \P in {a,b,c,d,e,f}{
    \fill (\P) circle (2pt);
  }

  \node[above left]  at (a) {$-2$};
  \node[above right] at (b) {$2$};
  \node[left]        at (c) {$2$};
  \node[right]       at (d) {$-2$};
  \node[below left]  at (e) {$2$};
  \node[below right] at (f) {$2$};
\end{scope}

\end{tikzpicture}
\caption{A $4\times3$ grid with two negative and four positive faces, and its face graph.}
\label{face22}
\end{figure}

\begin{definition}[Triangulation and face map]
Let $\Sigma$ be a $2$-connected plane signed graph with outer face $F_0$, and let
$\mathcal{B}$ denote the set of bounded faces of $\Sigma$.

For a bounded face $f\in\mathcal{B}$, a \emph{triangulation of $f$}
is a subdivision of $f$ into triangles by adding noncrossing diagonals
between vertices on the boundary of $f$.
Let $\tau(f)$ denote the number of triangles in any such triangulation of $f$.
Since $f$ is a polygon, the value $\tau(f)$ is independent of the chosen triangulation.

Define the \emph{face map}
\[
\phi:\mathcal{B}\to \mathbb{Z}
\]
by
\[
\phi(f)=\sigma(f)\,\tau(f),
\]
where $\sigma(f)\in\{\pm1\}$ is the sign of the face $f$.
\end{definition}

\begin{definition}[Removable face-vertex]
Let $D(\Sigma)$ be the face graph, and let $v_f$ be the vertex corresponding to a bounded face $f$.
We call $v_f$ \emph{removable} if
\begin{equation}\label{eq:removable}
\deg_{D(\Sigma)}(v_f)=|\phi(f)|+1.
\end{equation}
\end{definition}

To help a computer search for a Hamiltonian set, we can work in the face graph $D(\Sigma)$.
Label each vertex $v_f$ of $D(\Sigma)$ by the pair
\[
\bigl(\phi(f),\,\deg_{D(\Sigma)}(v_f)\bigr),
\]
where $f$ is the corresponding bounded face of $\Sigma$.
Using \eqref{eq:removable}, we may delete removable vertices one at a time.
The goal is to continue this elimination process until the remaining face graph is a tree (or reduces to a
single vertex), at which point the surviving faces form a candidate Hamiltonian set.

\begin{remark}
After deleting a vertex from the face graph, we obtain a new face graph (the induced subgraph on the remaining
face-vertices). In particular, the degrees of the remaining vertices may change after each deletion.
\end{remark}

\subsection{Criteria and local constructions for opposite-sign Hamiltonian circles}
 In Theorem~\ref{thm111}, our goal is to find two co-Hamiltonian sequences with distinct signs, which in turn
yield two Hamiltonian circles of opposite sign.  In practice, however, it may not be necessary to explicitly
construct such sequences.

Motivated by the behavior of signed complete graphs and signed grid graphs, we obtain the following theorems.
Under certain local face configurations, one can quickly determine the existence of both a positive Hamiltonian
circle and a negative Hamiltonian circle.

\begin{theorem}\label{thm111}
Let $\Sigma=(G,\sigma)$ be a simple plane signed graph that contains a Hamiltonian circle $C$.
Then $\Sigma$ contains both a positive Hamiltonian circle and a negative Hamiltonian circle
if and only if there exist two co-Hamiltonian sequences of faces
\[
\mathcal{F}_1 = (F_1,\dots,F_k)
\quad\text{and}\quad
\mathcal{F}_2 = (F'_1,\dots,F'_\ell),
\]
such that
\[
\prod_{i=1}^{k} \sigma(F_i)=-\prod_{j=1}^{\ell} \sigma(F'_j).
\]
\end{theorem}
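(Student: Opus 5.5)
The key identity I will use is that for any co-Hamiltonian face sequence $\mathcal{F}=(F_1,\dots,F_k)$ with Hamiltonian set $H$ and Hamiltonian circle $C_H$,
\[
\sigma(C_H)=\sigma(C_0)\prod_{i=1}^{k}\sigma(F_i),
\]
where $C_0=\partial F_0$ is the outer boundary circle of $\Sigma$. Once this holds, the theorem reduces to comparing signs.

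\emph{Step 1: the identity.} Since $C$ exists, $G$ is $2$-connected (as in the proof of Lemma~\ref{lem:peeling}), so Lemma~\ref{facegraph} applies to $G$ and gives $\sigma(C_0)=\prod_{f\in\mathcal{B}}\sigma(f)$. The final graph $G_k$ is $2$-connected, every vertex is exterior, and its unique Hamiltonian circle is its outer boundary circle by Lemma~\ref{lem:weak-dual-tree-unique-ham}. So Lemma~\ref{facegraph} applied to $G_k$ gives $\sigma(C_H)=\prod_{f\in H}\sigma(f)$.

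The point needing care is that the faces of $G_k$ are the original faces in $H$, and that $\mathcal{B}$ is the disjoint union of $H$ and $\{F_1,\dots,F_k\}$. This holds because each deletion of an outer edge merges exactly one bounded face of $G_{t-1}$ into the outer face and leaves the other bounded faces unchanged. By induction on $t$, every bounded face of $G_{t-1}$ is an original face, and the $F_t$ are distinct original faces. Dividing the two products, and using $\sigma^2=+$, gives the identity.

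\emph{Step 2: ($\Leftarrow$).} Given $\mathcal{F}_1,\mathcal{F}_2$ with opposite face products, the identity shows their Hamiltonian circles have opposite signs. Since both are Hamiltonian circles of $G$ (spanning subgraphs, as in Lemma~\ref{thm:ham-iff-coham-seq}), one is positive and one is negative.

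\emph{Step 3: ($\Rightarrow$).} Take a positive Hamiltonian circle $C_+$ and a negative one $C_-$. The forward construction in Lemma~\ref{thm:ham-iff-coham-seq} produces, for each, a co-Hamiltonian sequence whose final graph has that circle as its unique Hamiltonian circle. That construction peels outer edges not in the circle via Lemma~\ref{lem:peeling} until the bounded faces are exactly those inside the circle.

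By the identity, the face products equal $\sigma(C_0)\sigma(C_+)$ and $\sigma(C_0)\sigma(C_-)$, which are opposite. The degenerate case where one circle equals $C_0$ is covered by the empty sequence, whose product is $+$.

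\emph{Main obstacle.} The weakest point is Step 3's reliance on the peeling process ending with the uniqueness and all-exterior conditions. Each peeled face lies outside the circle and its vertices lie on the circle, so once all outside faces are removed the outer boundary is the circle itself, and Lemma~\ref{lem:weak-dual-tree-unique-ham} gives uniqueness. The remaining thing to check is that the peeling always terminates, and I will verify that in that order.
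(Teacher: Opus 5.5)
Your proof is correct and is essentially the paper's argument. Your identity $\sigma(C_H)=\sigma(C_0)\prod_i\sigma(F_i)$ is exactly the paper's $\sigma(C_i)=\sigma(H_i)=\sigma(\mathcal{B})\sigma(L_i)$, with $\sigma(\mathcal{B})=\sigma(C_0)$ by Lemma~\ref{facegraph}; both directions then multiply two such identities, and the forward direction uses Lemma~\ref{thm:ham-iff-coham-seq} to realize the faces outside $C_\pm$ as a co-Hamiltonian sequence. Your extra checks fill in details the paper leaves implicit: $2$-connectedness from Hamiltonicity, $\mathcal{B}=H\sqcup\{F_1,\dots,F_k\}$ with unchanged face boundaries, and that peeling only removes faces outside the circle and terminates.
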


\begin{proof}
Let $\mathcal{B}$ denote the set of all bounded faces of $G$.
For any (finite) set $S$ of bounded faces, write
\[
\sigma(S):=\prod_{f\in S}\sigma(f).
\]

\medskip
\noindent\emph{($\Leftarrow$).}
Assume there exist two co-Hamiltonian sequences
\[
\mathcal{F}_1=(F_1,\dots,F_k)
\qquad\text{and}\qquad
\mathcal{F}_2=(F'_1,\dots,F'_\ell)
\]
such that
\[
\prod_{i=1}^{k} \sigma(F_i)=-\prod_{j=1}^{\ell} \sigma(F'_j).
\]
Let $L_1:=\{F_1,\dots,F_k\}$ and $L_2:=\{F'_1,\dots,F'_\ell\}$, and define
\[
H_1:=\mathcal{B}\setminus L_1,
\qquad
H_2:=\mathcal{B}\setminus L_2.
\]
By the definition of a co-Hamiltonian sequence, $H_1$ determines a Hamiltonian circle $C_1$ and
$H_2$ determines a Hamiltonian circle $C_2$.
By Lemma~\ref{facegraph},
\[
\sigma(C_1)=\sigma(H_1),
\qquad
\sigma(C_2)=\sigma(H_2).
\]

Since 
\[
\sigma(\mathcal{B})=\sigma(H_1)\sigma(L_1)=\sigma(H_2)\sigma(L_2),
\]
 we have
\[
\sigma(H_1)=\sigma(\mathcal{B})\sigma(L_1),
\qquad
\sigma(H_2)=\sigma(\mathcal{B})\sigma(L_2).
\]
Therefore,
\[
\sigma(C_1)\sigma(C_2)
=\sigma(H_1)\sigma(H_2)
=\bigl(\sigma(\mathcal{B})\sigma(L_1)\bigr)\bigl(\sigma(\mathcal{B})\sigma(L_2)\bigr)
=\sigma(L_1)\sigma(L_2),
\]
because $\sigma(\mathcal{B})^2=1$.
The hypothesis $\sigma(L_1)=-\sigma(L_2)$ implies $\sigma(L_1)\sigma(L_2)=-1$, hence
\[
\sigma(C_1)\sigma(C_2)=-1,
\]
so $C_1$ and $C_2$ have opposite signs.

\medskip
\noindent\emph{($\Rightarrow$).}
Conversely, assume $G$ contains a positive Hamiltonian circle $C_+$ and a negative Hamiltonian circle $C_-$.
Let $H_+$ and $H_-$ be the sets of bounded faces inside $C_+$ and $C_-$, respectively, and set
\[
L_+:=\mathcal{B}\setminus H_+,
\qquad
L_-:=\mathcal{B}\setminus H_-.
\]
Since $C_+$ is a Hamiltonian circle, there exists a co-Hamiltonian sequence whose set of removed faces is $L_+$.
Likewise, there exists a co-Hamiltonian sequence whose set of removed faces is $L_-$.
Choose such sequences and write them as
\[
\mathcal{F}_1=(F_1,\dots,F_k)
\qquad\text{and}\qquad
\mathcal{F}_2=(F'_1,\dots,F'_\ell),
\]
so that $L_1:=\{F_1,\dots,F_k\}=L_+$ and $L_2:=\{F'_1,\dots,F'_\ell\}=L_-$.
By Lemma~\ref{facegraph},
\[
\sigma(C_+)=\sigma(H_+)=\sigma(\mathcal{B})\sigma(L_+),
\qquad
\sigma(C_-)=\sigma(H_-)=\sigma(\mathcal{B})\sigma(L_-).
\]
Multiplying gives
\[
\sigma(C_+)\sigma(C_-)=\sigma(L_+)\sigma(L_-).
\]
Since $\sigma(C_+)=+1$ and $\sigma(C_-)=-1$, we have $\sigma(C_+)\sigma(C_-)=-1$, hence
\[
\sigma(L_+)\sigma(L_-)=-1
\qquad\Longleftrightarrow\qquad
\sigma(L_+)=-\sigma(L_-).
\]
Equivalently,
\[
\prod_{i=1}^{k} \sigma(F_i)=-\prod_{j=1}^{\ell} \sigma(F'_j),
\]
as desired.
\end{proof}

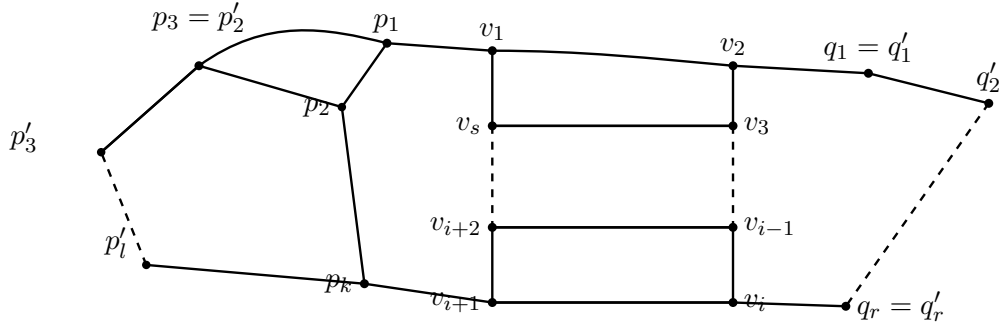
\begin{figure}[H]
\begin{tikzpicture}[scale=1, every node/.style={font=\small}]

\coordinate (p2) at (-3.9,3.05);

\coordinate (p12) at (-2,2.5);
\coordinate (p1) at (-1.4,3.35);
\coordinate (v1) at (0,3.25);
\coordinate (v2) at (3.2,3.05);
\coordinate (q1) at (5.0,2.95);
\coordinate (q2) at (6.6,2.55);   
\coordinate (tB) at (7.6,1.7);

\node[left] at (-2,2.5) {$p_2$};
\fill (p12) circle (1.6pt);
\draw[line width=0.9pt]
  (p2) .. controls (-3.0,3.7) and (-2.2,3.55) .. (p1)
  -- (v1) .. controls (1.2,3.25) and (2.1,3.15) .. (v2)
  -- (q1) -- (q2) ;
\draw[line width=0.9pt] (p1)--(p12);
\coordinate (pK) at (-1.7,0.15);
\draw[ line width=0.9pt] (pK)--(p12);

\coordinate (lp) at (-5.2,1.9);
\coordinate (lb) at (-4.6,0.4);

\fill (lp) circle (1.6pt);
\fill (lb) circle (1.6pt);
\node[left] at (-4.7,0.7) {$p_l'$};
\node[left] at (-5.9,2.1) {$p_3'$};   
\node[above] at (-3.9,3.35) {$p_3=p_2'$}; 
\draw[ line width=0.9pt] (lp)--(p2);
\draw[ line width=0.9pt] (lp)--(p2);
\draw[ line width=0.9pt] (lb)--(pK);
\draw[ line width=0.9pt] (p2)--(p12);

\draw[ dashed,line width=0.9pt] (lb)--(lp);

\coordinate (vh) at (0,2.25);
\coordinate (v3) at (3.2,2.25);

\draw[line width=0.9pt] (v1)--(vh)--(v3)--(v2);
\draw[line width=0.9pt] (vh)--(v3);

\coordinate (vL2) at (0,0.8);
\coordinate (vRim1) at (3.2,0.8);

\draw[dashed, line width=0.9pt] (vh) -- (vL2);
\draw[dashed, line width=0.9pt] (v3) -- (vRim1);

\coordinate (vIp2) at (0,0.9);
\coordinate (vIp1) at (0,-0.1);
\coordinate (vIm1) at (3.2,0.9);
\coordinate (vI)   at (3.2,-0.1);

\draw[line width=0.9pt] (vIp2)--(vIp1)--(vI)--(vIm1)--(vIp2);
\draw[line width=0.9pt] (vIp2)--(vIm1);

\coordinate (pK) at (-1.7,0.15);
\coordinate (qk) at (4.7,-0.15);

\draw[line width=0.9pt] (pK)--(vIp1)--(vI)--(qk);

\foreach \pt in {p2,p1,v1,v2,q1,q2,vh,v3,vIp2,vIp1,vIm1,vI,pK,qk}{
  \fill (\pt) circle (1.7pt);
}

\node[above] at (p1) {$p_1$};
\node[above] at (v1) {$v_1$};
\node[above] at (v2) {$v_2$};
\node[above] at (q1) {$q_1=q'_1$};
\node[above] at (q2) {$q'_2$};

\node[left]  at (vh) {$v_s$};
\node[right] at (v3) {$v_3$};

\node[left]  at (vIp2) {$v_{i+2}$};
\node[left]  at (vIp1) {$v_{i+1}$};
\node[right] at (vIm1) {$v_{i-1}$};
\node[right] at (vI)   {$v_i$};

\node[left]  at (pK) {$p_k$};
\node[right] at (qk) {$q_r=q'_r$};
\draw[dashed,line width=0.9pt] (q2)--(qk);

\end{tikzpicture}

\caption{The graph $G$ with the central ladder region used in the setup of Theorem~\ref{thm:opposite-signs-give-two-ham}.}

\label{fig:placeholder9}
\end{figure}


Let $\Sigma=(G,\sigma)$ be a simple plane signed graph (See Figure \ref{fig:placeholder9}) whose outer face boundary is the circle
\[
C_{\mathrm{out}}
:=v_1v_2q'_1q'_2\cdots q_r'\,v_i v_{i+1}\,p_k\,p'_{\ell}p'_{\ell-1}\cdots p'_3p'_2p'_1\,v_1.
\]
Assume that $\Sigma$ contains a circle
\[
C:=v_1v_2v_3\cdots v_{i-1}v_i v_{i+1}v_{i+2}\cdots v_s v_1
\]
such that the closed disk bounded by $C$ contains no vertices of $\Sigma$ in its interior.

Define the circle
\[
L
:=v_1v_s v_{s-1}\cdots v_{i+2}v_{i+1}\,p_k\,p'_{\ell}p'_{\ell-1}\cdots p'_3p'_2p'_1\,v_1.
\]
Assume that the edges of the path
\[
P_L:=p_1v_1v_s v_{s-1}\cdots v_{i+2}v_{i+1}p_k
\]
are fixed, i.e., none of the edges of $P_L$ is allowed to be deleted.
Suppose that there exists an edge set $E_L\subseteq E(\Sigma)$ disjoint from $E(P_L)$ such that, after deleting $E_L$,
\begin{enumerate}
\item every vertex  inside $L$ becomes incident with the outer face, and
\item the resulting graph $\Sigma-E_L$ is still $2$-connected.
\end{enumerate}
In particular, in $\Sigma-E_L$ there is a new outerface path from $p_1$ to $p_k$.

Assume symmetrically that the analogous statement holds on the right side: after deleting some edge set
$E_R$, while keeping the boundary path $q_1v_2v_3\cdots v_{i-1}v_iq_r$ fixed, every vertex in the corresponding
right region becomes incident with the outer face, the resulting graph remains $2$-connected, and in the end the circle
\[
R:=v_2q_1q_2\cdots q_r\,v_i v_{i-1}\cdots v_3v_2
\]
bounds a closed disk containing no vertices of $G$ in its interior.

\begin{theorem}\label{thm:opposite-signs-give-two-ham}
Under the assumptions above, let
\[
C_1:=v_1v_2v_3v_sv_1
\qquad\text{and}\qquad
C_2:=v_{i-1}v_iv_{i+1}v_{i+2}v_{i-1}.
\]
If $\sigma(C_1)\neq \sigma(C_2)$, then $\Sigma$ contains two Hamiltonian circles of opposite sign.
\end{theorem}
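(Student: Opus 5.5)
First we reduce the problem to the ladder. Delete $E_L$ and $E_R$, and write $G' := \Sigma - E_L - E_R$. By hypothesis $G'$ is $2$-connected, and every vertex strictly inside $L$ or inside $R$ becomes incident with the outer face. The fixed paths $P_L$ and $q_1v_2v_3\cdots v_iq_r$ survive.

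Deleting $E_R$ only removes edges inside the region bounded by $R$ together with the outer boundary. So the left operation and the right operation act on regions that meet only along the circle $C$. Therefore they do not interfere, and both conclusions hold at the same time in $G'$. The only vertices that may still be interior in $G'$ are the rungs of the ladder, namely the vertices $v_3,\dots,v_{i-1}$ and $v_{i+2},\dots,v_s$ lying on $C$. By assumption $C$ bounds a disk with no interior vertices, so inside $C$ there are only chords. In the ladder picture these chords are the rungs $v_3v_s$, $\dots$, $v_{i-1}v_{i+2}$, and they cut the disk into the $4$-faces $C_1$, $\dots$, $C_2$.

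Next we write down two Hamiltonian circles. In $G'$, consider the boundary of the left region and of the right region. Deleting the top edge $v_1v_2$ of $C_1$ merges the face $C_1$ into the outer face. Deleting the bottom edge $v_iv_{i+1}$ of $C_2$ merges $C_2$ instead. The top edge is an outer edge because $v_1v_2$ lies on $C_{\mathrm{out}}$, and similarly for $v_iv_{i+1}$. From here we carry out the ladder peeling of Lemma~\ref{lem:coham}: starting from the deleted end, we alternately remove outer rungs, so that we remove every other $4$-face of the ladder. This makes every ladder vertex exterior while keeping $2$-connectedness.

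Done from the top, this produces a co-Hamiltonian sequence $\mathcal F_1$ containing $C_1$ and omitting $C_2$. Done from the bottom, it produces $\mathcal F_2$ containing $C_2$ and omitting $C_1$. We arrange the two sequences to share all other removed faces, including the faces merged by $E_L$ and $E_R$. Each final graph is $2$-connected and outerplane, so by Lemma~\ref{lem:weak-dual-tree-unique-ham} it has a unique Hamiltonian circle. Hence both sequences are genuinely co-Hamiltonian.

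The sign computation then follows. The face sets removed by the two sequences differ only by the pair $\{C_1\}$ versus $\{C_2\}$, so we get
$$\prod_{F\in\mathcal F_1}\sigma(F)=\sigma(C_1)\,\sigma(C_2)\prod_{F\in\mathcal F_2}\sigma(F)=-\prod_{F\in\mathcal F_2}\sigma(F),$$
using $\sigma(C_1)\neq\sigma(C_2)$. Theorem~\ref{thm111} (or directly Lemma~\ref{facegraph}, as in Lemma~\ref{lemma123}) then gives two Hamiltonian circles of opposite sign.

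The main obstacle is the parity of the ladder. Shifting the alternating pattern by one rung must swap exactly $C_1$ for $C_2$ and leave the other removed faces unchanged. This works cleanly when the number of $4$-faces in the ladder has the right parity. For the other parity, we would first try changing the pattern only near the ends, using the fixed paths to absorb the shift. If that fails, we would instead compare both circles against a common intermediate configuration and apply Lemma~\ref{lemma123} twice.
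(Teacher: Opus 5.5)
\textbf{There is a genuine gap in the peeling step.} Your framework is fine: two co-Hamiltonian sequences whose removed face sets differ exactly in $C_1$ versus $C_2$, followed by Lemma~\ref{facegraph} or Theorem~\ref{thm111}. But the sequences you build do not have that property. The ``every other $4$-face'' pattern borrowed from Lemma~\ref{lem:coham} is the wrong one in this configuration.

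Label the ladder squares $S_1=C_1,S_2,\dots,S_m=C_2$ from top to bottom. Take the simplest instance: the released left region is a single face $F_L$ bounded by the $p$-chain and $v_1v_s\cdots v_{i+1}$, and likewise $F_R$ on the right. Then:
\begin{itemize}
\item the $p_j$ and $q_j$ have degree $2$, so every Hamiltonian circle runs along both chains and encloses $F_L$ and $F_R$;
\item $F_L$ and $F_R$ meet only through the ladder;
\item a side vertex between $S_j$ and $S_{j+1}$ is exterior only if one of those two squares is not enclosed;
\item enclosing two non-adjacent squares leaves a hole.
\end{itemize}
Hence every Hamiltonian circle encloses exactly one ladder square. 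An alternating pattern therefore fails for $m\ge 4$. It cannot even be produced from one end: once $S_1$ is merged, no edge of $S_3$ lies on the outer face.

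The bookkeeping also fails. The two alternating patterns started from opposite ends coincide and contain both $C_1$ and $C_2$ when $m$ is odd, and are complementary on the ladder when $m$ is even. Only for $m=2$ do they differ by exactly $\{C_1\}$ versus $\{C_2\}$. So the parity issue you flag is not a technicality to be absorbed near the ends, and your fallbacks are not carried out.

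\textbf{The missing idea is to peel the ladder completely rather than alternately.} Delete $v_1v_2$, then $v_sv_3$, then each successive rung, merging $S_1,\dots,S_{m-1}$ so that only $C_2$ survives. Symmetrically, peel upward from $v_iv_{i+1}$ so that only $C_1$ survives. Each deleted rung has just become an outer edge. No deleted rung lies on the Hamiltonian circle enclosing the kept square, so the surviving graph always contains that circle and stays $2$-connected, as in the Peeling Lemma. The removed sets $\{S_1,\dots,S_{m-1}\}$ and $\{S_2,\dots,S_m\}$ differ exactly by $C_1$ versus $C_2$, and your sign computation then goes through.

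These are precisely the paper's two circles, but the paper avoids face bookkeeping altogether. It writes down
\[
H_1=v_1v_s\cdots v_{i+2}v_{i-1}\cdots v_3v_2q_1\cdots q_rv_iv_{i+1}p_k\cdots p_1v_1,
\]
which encloses only $C_2$ among the ladder squares. It then sets $E(H_2)=E(H_1)\oplus E(C_1)\oplus E(C_2)$ and checks that this is again a Hamiltonian circle. From the edge signs it reads off $\sigma(H_2)=\sigma(H_1)\sigma(C_1)\sigma(C_2)=-\sigma(H_1)$, with no parity case analysis and no use of the intermediate rungs.
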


\begin{proof}
Let $\Sigma=(G,\sigma)$ satisfy all hypotheses, and keep the notation $E_L,E_R,L,R$ from the setup.

\medskip
\noindent\textbf{Step 1: Reduce to the released graph.}
Let
\[
\Sigma':=\Sigma-(E_L\cup E_R).
\]
By assumption, $\Sigma'$ is still $2$-connected. Moreover, the left and right release assumptions imply that every
vertex that was  inside $L$ (respectively, inside $R$) becomes incident with the outer face of
$\Sigma'$. In particular, all vertices on the $p$-chain ($:=p_1p_2...p_k$) and $q$-chain lie on the outer boundary of $\Sigma'$.
Only vertices in the ladder region (Figure~\ref{fig:placeholder9}) may remain non-outer.

\medskip
\noindent\textbf{Step 2: Define $H_1$ and $H_2$.}
Consider the Hamiltonian circle $H_1$ given  by
\[
H_1 := v_1v_s\cdots v_{i+2}v_{i-1}\cdots v_3v_2q_1q_2\cdots q_rv_iv_{i+1}p_k\cdots p_2p_1v_1.
\]
Thus $H_1$ traverses every vertex exactly once.

Now define $H_2$ to be the circle obtained from $H_1$ by toggling along $C_1$ and $C_2$, that is,
\[
E(H_2):=E(H_1)\oplus E(C_1)\oplus E(C_2),
\]
where $\oplus$ denotes symmetric difference.

\medskip
Observe that $H_2$ is spanning and $2$-regular, that is, it contains all vertices and every vertex has degree $2$ in $H_2$.
Moreover, the toggling operation only reroutes the traversal locally and does not disconnect the circle.
Hence $H_2$ is a Hamiltonian circle.

\medskip
\noindent\textbf{Step 3: Compare the signs.}
Since $E(H_2)=E(H_1)\oplus E(C_1)\oplus E(C_2)$ , we have
\[
\sigma(H_2)=\sigma(H_1)\,\sigma(C_1)\,\sigma(C_2).
\]
Because each circle sign is $\pm1$, the hypothesis $\sigma(C_1)\neq \sigma(C_2)$ implies
$\sigma(C_1)\sigma(C_2)=-1$, and hence $\sigma(H_2)=-\sigma(H_1)$.

Therefore $\Sigma$ contains two Hamiltonian circles of opposite sign, namely $H_1$ and $H_2$.

\end{proof}

We now describe a second local configuration that forces the existence of Hamiltonian circles of both signs.
\begin{figure}[H]
  \begin{tikzpicture}[scale=1, every node/.style={font=\small}]
\coordinate (p2) at (-5.0,2.0);
\coordinate (p1) at (-3.6,2.0);
\coordinate (v1) at (-1.8,2.0);
\coordinate (v2) at ( 1.2,2.0);
\coordinate (q1) at ( 2.4,2.0);
\coordinate (q2) at ( 3.6,1.9);

\draw[line width=0.9pt] (p2)--(p1)--(v1)--(v2)--(q1)--(q2);

\coordinate (pk) at (-3.6,0.0);
\coordinate (v4) at (-1.4,0.0);
\coordinate (v3) at ( 1.2,0.0);
\coordinate (qk) at ( 2.8,0.0);

\draw[line width=0.9pt] (pk)--(v4)--(v3)--(qk);

\coordinate (rL) at (-2.4,1.0);
\coordinate (r4) at ( 2.1,1.0);

\draw[line width=0.9pt]
  (v1) .. controls (-1.9,1.55) and (-2.1,1.25) .. (rL)
       .. controls (-2.0,0.75) and (-1.7,0.35) .. (v4);

\draw[line width=0.9pt]
  (v2) .. controls ( 1.0,1.55) and ( 1.4,1.25) .. (r4)
       .. controls ( 1.4,0.75) and ( 1.1,0.35) .. (v3);

\draw[dotted, line width=1.0pt] (rL) -- (r4);

\coordinate (m1) at (-1.2,1.0);
\coordinate (m2) at (-0.2,1.0);
\coordinate (m3) at ( 0.9,1.0);


\foreach \pt in {p2,p1,v1,v2,q1,q2,pk,v4,v3,qk,rL,r4,m1,m2,m3}{
  \fill (\pt) circle (1.7pt);
}

\node[above] at (p2) {$p_2$};
\node[above] at (p1) {$p_1=p'_1$};
\node[above] at (v1) {$v_1$};
\node[above] at (v2) {$v_2$};
\node[above] at (q1) {$q_1=q'_1$};
\node[above] at (q2) {$q_2$};

\node[below] at (pk) {$p_k=p'_k$};
\node[below] at (v4) {$v_4$};
\node[below] at (v3) {$v_3$};
\node[below] at (qk) {$q_l=q'_l$};

\node[left]  at (rL) {$r_1$};
\node[right] at (r4) {$r_t$};
\draw[dashed,line width=0.9pt] (p2)--(pk);
\draw[dashed,line width=0.9pt] (q2)--(qk);
\end{tikzpicture}

  \caption{The resulting graph $\Sigma_0$ after removing the edge sets $E_L$ and $E_R$ from $\Sigma$.}

    \label{fig:placeholder}
\end{figure}
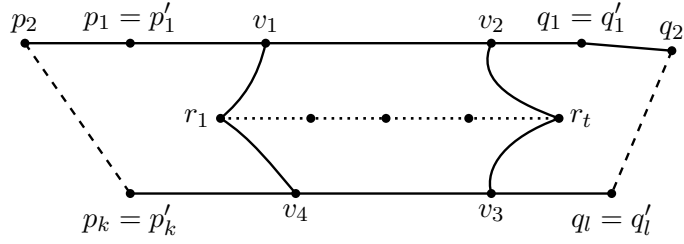

Let $\Sigma=(G,\sigma)$ be a simple plane signed graph with outer circle
\[
C_{\mathrm{out}}
:=v_1v_2q'_1q'_2\cdots q'_\ell v_3v_4p'_kp'_{k-1}\cdots p'_2p'_1v_1.
\]

Assume that $\Sigma$ contains an  hexagon (it is possible that $r_1=r_t$)
\[
H:=v_1v_2r_tv_3v_4r_1v_1.
\]
Assume moreover that every vertex in the interior of $H$ lies on a path
\[
R:=r_1r_2\cdots r_t
\]
embedded inside $H$; in particular, $r_1$ and $r_t$ lie on $H$, while all other vertices
$r_2,\dots,r_{t-1}$ lie  in the interior of $H$.

Consider the circle
\[
C'_L:=v_1r_1v_4p'_kp'_{k-1}\cdots p'_2p'_1v_1.
\]
Assume that the four  edges
\[
p_1v_1,\quad v_1r_1,\quad r_1v_4,\quad v_4p_k, (\text{here}, p_1=p'_1,p_k=p'_k)
\]
are fixed, that is, they are not allowed to be deleted.
Suppose that there exists a set (or sequence) of edges
$E_L\subseteq E(G)$ such that deleting $E_L$ releases all vertices
strictly inside $C'_L$ to the outer face, and the resulting graph
$\Sigma-E_L$ is still $2$-connected. After deleting the edge set $E_L$, the resulting graph $\Sigma-E_L$ contains the circle
\[
C_L:=v_1r_1v_4p_kp_{k-1}\cdots p_2p_1v_1.
\]

Similarly, consider the circle
\[
C'_R:=v_2q'_1q'_2\cdots q'_\ell v_3r_tv_2.
\]
Assume that there exists a set (or sequence) of edges
$E_R\subseteq E(G)$ such that deleting $E_R$ releases all vertices
strictly inside $C'_R$ to the outer face, and the resulting graph
$\Sigma-E_R$ is still $2$-connected. At the end of the right-side deletion process, the resulting graph contains the circle
\[
C_R:=v_2q_1q_2\cdots q_\ell v_3r_tv_2.
\]

\begin{theorem}\label{thm:two-opposite-ribbon-cycles-give-both-signs}
Let $\Sigma=(G,\sigma)$  be a simple plane signed graph satisfying all assumptions stated above.
In particular, $\Sigma$ has outer circle
$C_{\mathrm{out}}$ that contains the  hexagon
$H:=v_1v_2r_4v_3v_4r_1v_1,$
and every vertex in the interior of $H$ lies on a path $R=r_1r_2\cdots r_t$ embedded inside $H$.
Assume moreover that the left and right release conditions hold for the circles $C'_L$ and $C'_R$, and after releasing edge we obtain $C_L$ and $C_R$. If two circles $C_1:=v_1v_2r_t r_{t-1}\cdots r_1v_1$ and $C_2:=v_3v_4r_1 r_2\cdots r_tv_3$ have different signs, then $\Sigma$ contains both a positive Hamiltonian circle and a negative
Hamiltonian circle.
\end{theorem}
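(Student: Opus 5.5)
The plan is to imitate the proof of Theorem~\ref{thm:opposite-signs-give-two-ham}. First pass to the released graph, then write down two explicit Hamiltonian circles whose edge sets differ by $E(C_1)\oplus E(C_2)$, and finally compare their signs multiplicatively.

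\textbf{Step 1: the released graph.} Set $\Sigma_0:=\Sigma-(E_L\cup E_R)$, as in Figure~\ref{fig:placeholder}. The left and right release hypotheses touch disjoint regions: the interior of $C'_L$ and the interior of $C'_R$. The only edges shared by these regions are the fixed edges on $H$. So the two deletions can be performed one after the other, and I would argue that $\Sigma_0$ is still $2$-connected. After the deletions, every vertex strictly inside $C'_L$ or $C'_R$ lies on the outer face, and the circles $C_L$ and $C_R$ are present. Hence the only vertices of $\Sigma_0$ off the paths $p_1\cdots p_k$ and $q_1\cdots q_\ell$ are $v_1,v_2,v_3,v_4$ and the vertices $r_1,\dots,r_t$ of $R$. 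Since any Hamiltonian circle of $\Sigma_0$ is also one of $\Sigma$, it suffices to work in $\Sigma_0$.

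\textbf{Step 2: two Hamiltonian circles.} Define
\[
A:=v_1v_2q_1q_2\cdots q_\ell v_3r_tr_{t-1}\cdots r_1v_4p_kp_{k-1}\cdots p_1v_1 ,
\]
which uses the top edge $v_1v_2$ and the path $R$ traversed from $r_t$ to $r_1$. Define
\[
B:=v_1r_1r_2\cdots r_tv_2q_1\cdots q_\ell v_3v_4p_k\cdots p_1v_1 ,
\]
which uses the bottom edge $v_3v_4$ and traverses $R$ from $r_1$ to $r_t$. By the hypothesis on $H$, every vertex lies on exactly one of the chains $p$, $q$, $R$ or is one of $v_1,\dots,v_4$. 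Both $A$ and $B$ visit each such vertex exactly once, so both are Hamiltonian circles. The needed edges exist: $v_1r_1,r_1v_4$ are edges of $H$ and of $C_L$, and $v_2r_t,r_tv_3$ are edges of $H$ and of $C_R$. If $r_1=r_t$, the same formulas still work, with $R$ reduced to a single vertex.

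\textbf{Step 3: signs.} The symmetric difference $E(A)\oplus E(B)$ consists of
\[
\{v_1v_2,\ v_3r_t,\ r_1v_4\}\ \oplus\ \{v_1r_1,\ r_tv_2,\ v_3v_4\}.
\]
The edges of $R$ and of the $p$ and $q$ chains cancel. A direct check shows this set equals $E(C_1)\oplus E(C_2)$: the edges of $R$ appear in both $C_1$ and $C_2$ and cancel, leaving exactly those six edges. Signs are multiplicative over edges and the shared edges contribute squares, so
\[
\sigma(B)=\sigma(A)\,\sigma(C_1)\,\sigma(C_2)=-\sigma(A).
\]
Thus one of $A,B$ is positive and the other negative.

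\textbf{Main obstacle.} I expect Step 1 to be the hardest part. One must show that the two release operations do not interfere and that together they leave no stray interior vertex outside $H\cup R$. It must also be checked that the outer-face paths $p_1\cdots p_k$ and $q_1\cdots q_\ell$ produced by the releases are genuine paths through all released vertices. I would handle this by noting that $E_L$ lies inside $C'_L$ and $E_R$ lies inside $C'_R$, and then invoking the release hypotheses separately on each side.
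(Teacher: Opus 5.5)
Your proposal is correct and is essentially the paper's proof. It uses the same released graph $\Sigma_0$ and the same two circles ($A$ and $B$ are exactly the paper's $H_1$ and $H_2$), and your identity $E(A)\oplus E(B)=E(C_1)\oplus E(C_2)$ is a repackaging of the paper's ratio computation $\sigma(H_1)/\sigma(H_2)=\sigma(C_2)/\sigma(C_1)$. The $2$-connectedness of the combined $\Sigma_0$, which you flag as the main obstacle, is only asserted in the paper and is not actually needed: the one-sided release hypotheses already give the covering $p$- and $q$-paths, and then exhibiting the two spanning circles in $\Sigma$ suffices.
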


\begin{proof}
Let $\Sigma=(G,\sigma)$ satisfy the assumptions stated above.  By hypothesis, there exist edge sets
$E_L,E_R\subseteq E(G)$ such that deleting $E_L$ releases all vertices  inside $C'_L$ to the outer face
and $\Sigma-E_L$ remains $2$-connected, and similarly deleting $E_R$ releases all vertices  inside $C'_R$
to the outer face and $\Sigma-E_R$ remains $2$-connected.

Let
\[
\Sigma_0 := \Sigma-(E_L\cup E_R).
\]
Then $\Sigma_0$ is still $2$-connected (See Figure \ref{fig:placeholder}).  Moreover, by the two release conditions, every vertex of $G$ outside the
``central'' region bounded by the hexagon is now incident with the outer face of $\Sigma_0$; hence the only vertices
that can remain in the interior of the central region are exactly the vertices on the path
$R=r_1r_2\cdots r_t$.

In $\Sigma_0$ consider the following two circles:
\[
H_1 := v_1v_2q_1q_2\cdots q_\ell v_3r_t r_{t-1}\cdots r_1v_4p_kp_{k-1}\cdots p_2p_1v_1,
\]
\[
H_2 := v_1r_1r_2\cdots r_tv_2q_1q_2\cdots q_\ell v_3v_4p_kp_{k-1}\cdots p_2p_1v_1.
\]
Both $H_1$ and $H_2$ traverse every vertex of $G$ exactly once.  Therefore, $H_1$ and $H_2$ are Hamiltonian circles of $\Sigma_0$.
Since $\Sigma_0$ is obtained from $\Sigma$ by deleting edges only, these circles are also Hamiltonian circles of $\Sigma$.

It remains to compare their signs.  Let
\[
Q := v_2q_1q_2\cdots q_\ell v_3
\quad\text{and}\quad
P := v_4p_kp_{k-1}\cdots p_2p_1v_1,
\]
and let $R$ also denote the path $r_1r_2\cdots r_t$.
Write $\sigma(Q)$, $\sigma(P)$, and $\sigma(R)$ for the products of the edge-signs along these paths.
Then, by inspection,
\[
\sigma(H_1)=\sigma(P)\,\sigma(Q)\,\sigma(R)\,\sigma(v_1v_2)\,\sigma(v_3r_t)\,\sigma(v_4r_1),
\]
\[
\sigma(H_2)=\sigma(P)\,\sigma(Q)\,\sigma(R)\,\sigma(v_1r_1)\,\sigma(v_2r_t)\,\sigma(v_3v_4).
\]
On the other hand, for the two circles in the statement,
\[
C_1=v_1v_2r_tr_{t-1}\cdots r_1v_1
\quad\text{and}\quad
C_2=v_3v_4r_1r_2\cdots r_tv_3,
\]
we have
\[
\sigma(C_1)=\sigma(R)\,\sigma(v_1v_2)\,\sigma(v_2r_t)\,\sigma(v_1r_1),
\qquad
\sigma(C_2)=\sigma(R)\,\sigma(v_3v_4)\,\sigma(v_4r_1)\,\sigma(v_3r_t).
\]
Therefore
\[
\frac{\sigma(H_1)}{\sigma(H_2)}
=
\frac{\sigma(v_1v_2)\,\sigma(v_3r_t)\,\sigma(v_4r_1)}
     {\sigma(v_1r_1)\,\sigma(v_2r_t)\,\sigma(v_3v_4)}
=
\frac{\sigma(C_2)}{\sigma(C_1)}.
\]
In particular, $\sigma(H_1)=\sigma(H_2)$ if and only if $\sigma(C_1)=\sigma(C_2)$.
By hypothesis, $\sigma(C_1)\neq\sigma(C_2)$, so $\sigma(H_1)\neq\sigma(H_2)$.
Since each circle-sign is $\pm 1$, this implies $\sigma(H_1)=-\sigma(H_2)$.

Hence $\Sigma$ contains one positive Hamiltonian circle and one negative Hamiltonian circle.
\end{proof}

\newpage

\section{Acknowledgment}

The author thanks Professor Thomas Zaslavsky for his invaluable suggestions and recommendations during the preparation of this draft.


\end{document}